\newtheorem{theorem}{Theorem}[section]
\newtheorem{fact}[theorem]{Fact}
\newtheorem{corollary}[theorem]{Corollary}
\newtheorem{proposition}[theorem]{Proposition}
\newtheorem{conjecture}[theorem]{Conjecture}
\theoremstyle{definition}
\newtheorem{definition}[theorem]{Definition}
\newtheorem{remark}[theorem]{Remark}
\newtheorem{question}[theorem]{Question}
\DeclareMathOperator{\tp}{tp}
\newcommand{\N}{\mathbb{N}}
\newcommand \Z{\mathbb{Z}}
\newcommand \BD{\operatorname{BD}}
\newcommand \NIP{\operatorname{NIP}}
\newcommand{\Erdos}{Erd\H{o}s}
\newcommand{\G}{\mathbb{G}}
\newcommand\seq{\subseteq}
\def\indsym#1#2{%
  \setbox0=\hbox{$\m@th#1x$}%
  \kern\wd0%
  \hbox to 0pt{\hss$\m@th#1\mid$\hbox to 0pt{$\m@th#1^{#2}$}\hss}%
  \lower.9\ht0\hbox to 0pt{\hss$\m@th#1\smile$\hss}%
  \kern\wd0}
\newcommand{\ind}[1][]{\mathop{\mathpalette\indsym{#1}}}
\title{Definable sets containing productsets in expansions of groups}
\author[U. Andrews]{Uri Andrews}
\address{Department of Mathematics\\
University of Wisconsin\\
Madison, WI 53706\\
USA}
\email{andrews@math.wisc.edu}
\author[G. Conant]{Gabriel Conant}
\address{Department of Mathematics\\
University of Notre Dame\\
Notre Dame, IN, 46656\\
 USA}
\email{gconant@nd.edu}
\author[I. Goldbring]{Isaac Goldbring}
\thanks{Goldbring's work was partially supported by NSF CAREER grant DMS-1349399.}
\address{Department of Mathematics\\
University of California, Irvine\\
Irvine, CA 92697\\
 USA}
\email{isaac@math.uci.edu}
\begin{document}

\begin{abstract}
We consider the question of when sets definable in first-order expansions of groups contain the product of two infinite sets (we refer to this as the ``productset property").  We first show that the productset property holds for any definable subset $A$ of an expansion of a discrete amenable group such that $A$ has positive Banach density and the formula $x\cdot y\in A$ is stable. For arbitrary expansions of groups, we consider a ``$1$-sided" version of the productset property, which is characterized in various ways using coheir independence.  For stable groups, the productset property is equivalent to this $1$-sided version, and behaves as a notion of largeness for definable sets, which can be characterized by a natural weakening of model-theoretic genericity. Finally, we use recent work on regularity lemmas in distal theories to prove a definable version of the productset property for sets of positive Banach density definable in certain distal expansions of amenable groups.
\end{abstract}

\maketitle

\section{Introduction}

Given a subset $A$ of the set of integers $\Z$, the \emph{upper Banach density} of $A$ is 
\[
\BD(A)=\lim_{n\rightarrow\infty}\sup_{m\in\Z}\frac{|A\cap [m+1,m+n]|}{n},
\]
where, given integers $a<b$, $[a,b]$ denotes the interval $\{a,a+1,\ldots,b\}$. 

We are motivated by the following strengthening of a conjecture of \Erdos.

\begin{conjecture}\label{conj:Erdos}
Given $A\subseteq\Z$, if $\BD(A)>0$, then there are infinite sets $B,C\subseteq\Z$ such that $B+C\subseteq A$.
\end{conjecture}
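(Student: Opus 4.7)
The plan is to pass to the ergodic-theoretic side via the Furstenberg correspondence principle. Given $A \seq \Z$ with $\BD(A) = \alpha > 0$, one obtains a measure-preserving $\Z$-system $(X, \mathcal{B}, \mu, T)$ together with a measurable set $E \seq X$ of measure $\alpha$, such that for every finite $F \seq \Z$,
\[
\BD\Bigl(\bigcap_{n \in F}(A - n)\Bigr) \geq \mu\Bigl(\bigcap_{n \in F} T^{-n}E\Bigr).
\]
This reduces the conjecture to producing an infinite sequence $c_1 < c_2 < \cdots$ in $\Z$ for which $\mu(E \cap T^{-c_{i_1}}E \cap \cdots \cap T^{-c_{i_k}}E) > 0$ for every finite subsequence: from such a $C = \{c_i\}$, a diagonal/compactness argument then extracts an infinite $B$ with $B + C \seq A$.

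To build $C$, I would exploit the spectral decomposition of the Koopman operator of the Furstenberg system into a Kronecker (compact) factor and a weak-mixing complement. On the Kronecker factor, the return-time set $\{n : \mu(E \cap T^{-n}E) > 0\}$ is syndetic and carries a rich Bohr-almost-periodic structure, suggesting that $C$ should be sought inside a Bohr set where the Kronecker projection of $1_E$ behaves nearly periodically. One would then aim to build $C$ inductively, selecting $c_{k+1}$ from a syndetic ``return-time'' set attached to $E \cap \bigcap_{i \leq k} T^{-c_i}E$, while simultaneously keeping the weak-mixing component of $1_E$ under control so that the enlarged intersection still has positive measure.

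The main obstacle is precisely this last control: the weak-mixing component generates genuine independence and decay of correlations, and positive measure of the $k$-fold intersection does not, by itself, yield positive measure of a $(k+1)$-fold intersection. Overcoming this likely requires isolating a well-chosen generic point $x$ in the support of the Kronecker projection of $1_E$ at which almost-periodic return dominates the weak-mixing noise, and then extracting $C$ from the orbit of $x$. This step demands substantially more machinery than the recurrence arguments behind Furstenberg's proof of Szemer\'edi's theorem, and it is the reason the paper instead focuses on definable variants of the problem, where model-theoretic tameness (stability, distality, coheir independence) stands in for the ergodic structure theory that is not available in the general combinatorial setting.
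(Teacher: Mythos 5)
You have not given a proof, and you could not have: the statement is labeled a \emph{conjecture} in the paper precisely because it was open at the time of writing, and the paper never claims to resolve it. What the paper actually proves are weaker or conditional results: Proposition \ref{positiveBD} shows, by a short nonstandard-analysis argument with hyperfinite intervals, that positive Banach density yields only the \emph{$1$-sided} sumset property (an infinite pair of sequences $(b_i),(c_j)$ with $b_i+c_j\in A$ for $i\le j$, not a genuine product $B+C$); and Theorem 2.5 upgrades this to the full sumset property only under the additional hypothesis that $x\cdot y\in A$ is a stable formula, via Proposition \ref{prop:Ramsey} (a Ramsey argument). The distal results in Section \ref{sec:distal} give definable versions, again under extra tameness hypotheses. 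So there is no ``paper's own proof'' of Conjecture \ref{conj:Erdos} to compare you against.

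Your sketch correctly identifies the Furstenberg-correspondence reduction and, to your credit, you also pinpoint the genuine obstruction: controlling the weak-mixing component so that positive measure of $E\cap\bigcap_{i\le k}T^{-c_i}E$ can be propagated to $k+1$ while keeping the $c_i$ in $\Z$. That is exactly where the argument breaks, and nothing in your sketch closes it. A secondary gap you gloss over: even granting that every finite sub-intersection $\bigcap_{i\in F}(A-c_i)$ has positive Banach density, your appeal to a ``diagonal/compactness argument'' to extract an infinite $B$ does not work as stated --- positive density of all finite intersections does not make the infinite intersection nonempty, and one must instead build $B$ and $C$ simultaneously by interleaving (compare the back-and-forth construction in the paper's Proposition \ref{prop:MDN}, or the role of $c_j$ in Proposition \ref{positiveBD}). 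What you have written is a competent survey of why the conjecture is hard, not an argument for it, and it is disjoint from the nonstandard/model-theoretic route the paper actually takes for its partial results.
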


This conjecture is stated for \emph{positive lower density} by \Erdos\ and Graham in \cite{ErGr}.  Progress was made recently by Di Nasso, Goldbring, Jin, Leth, Lupini, and Mahlburg \cite{DGJLLM}, where they showed that the conjecture holds if $\BD(A)>\frac{1}{2}$ and that a weak version of the conjecture holds in general:  if $\BD(A)>0$, then there are infinite sets $B,C\subseteq \Z$ and $k\in \Z$ such that $B+C\subseteq A\cup (A+k)$.  They also prove a variant of the aforementioned result for an arbitrary amenable group $G$,\footnote{Recall that a group is amenable if it admits a finitely additive left-invariant probabillity measure defined on all subsets of the group.  Using F{\o}lner sequences, one can make sense of the Banach density of a subset of an amenable group (see \cite{DGJLLM}).} where now one is interested in when sets in $G$ contain the product $B\cdot C$ of two infinite sets $B,C\subseteq G$.

In this note, we consider definable subsets of first-order expansions of groups which contain the product of two infinite sets (we call this the \emph{productset property}). Using the results in \cite{DGJLLM}, we show that if $G$ is an expansion of a discrete amenable group and $A\subseteq G$ is a definable set such that $\BD(A)>0$ and the formula $x\cdot y\in A$ is stable, then $A$ has the productset property. For the case $G=\Z$, we use nonstandard analysis to give a complete proof of this result, which does not directly rely on \cite{DGJLLM}. In particular, this confirms Conjecture \ref{conj:Erdos} for sets $A\subseteq \Z$ such that ``$x+y\in A$" is stable in the expansion $(\Z,+,0,A)$. 

We then analyze the model theoretic content of the productset property for definable sets in first-order expansions of groups. Specifically, we consider a more flexible notion, the \emph{1-sided productset property} (see Definition \ref{def:1side}), which coincides with the productset property in the case of stable groups. Motivated by an unpublished observation of DiNasso (Proposition \ref{prop:MDN}), we show that the productset property for a definable set $A$, in an arbitrary first-order expansion of a group $G$, is equivalent to the existence of nonalgebraic global types $p,q$ finitely satisfiable in $G$ such that the formula $x\cdot y\in A$ is contained in $p(x)\otimes q(y)$ and $q(y)\otimes p(x)$. Using this we conclude that, when $G$ is stable, the productset property for definable $A\subseteq G$ is equivalent to the existence of a nonstandard element $c\not\in G$ such that the translate $A\cdot c$ has infinitely many solutions in $G$. Thus the productset property can be viewed as a natural weakening of the notion of \emph{generic} definable sets (where this condition holds for \emph{all} such translates). We also consider a finitary version of the productset property and its connection with \emph{generically stable types}.  

Finally, we consider the finitary productset property in the setting of distal groups. Using recent work of Chernikov and Starchenko \cite{ChSt}, we show that for distal groups, the finitary productset property is always witnessed by a definable family of sets. Using this, we show that if $G$ is a distal expansion of an amenable group, and if $G$ eliminates the quantifier $\exists^\infty$, then any definable subset of $G$ with positive Banach density has the productset property witnessed by definable sets.

\section{\Erdos's conjecture in the stable setting}

Motivated by Conjecture \ref{conj:Erdos}, we define the following properties of subsets of groups.

\begin{definition}\label{def:1side}
Let $G$ be a group.
\begin{enumerate}
\item A set $A\subseteq G$ has the \emph{productset property} if there are infinite $B,C\subseteq G$ such that $B\cdot C\subseteq A$.
\item A set $A\subseteq G$ has the \emph{$1$-sided productset property} if there are infinite sequences $(b_i)_{i<\omega}$ and $(c_i)_{i<\omega}$ in $G$ such that $b_i\cdot c_j\in A$ for all $i\leq j<\omega$.
\end{enumerate}
\end{definition}

If the group $G$ is abelian, then we will speak of the (1-sided) sumset property rather than the (1-sided) productset property.

The next fact is a consequence of \cite[Lemma 3.4]{DGJLLM}, which is proved using nonstandard analysis and technical results from ergodic theory.

\begin{fact}\label{fact:DGJLLM}
Let $G$ be an amenable group. If $\BD(A)>0$, then $A$ has the $1$-sided productset property. 
\end{fact}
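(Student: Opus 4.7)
The plan is to unpack \cite[Lemma 3.4]{DGJLLM} into the inductive form demanded by Definition \ref{def:1side}(2); the cited lemma supplies the ergodic-theoretic heart of the argument via nonstandard analysis, while what remains is a straightforward induction.

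First I would set up the nonstandard framework of \cite{DGJLLM}: pass to a countably saturated elementary extension $G\preceq G^*$ and fix a hyperfinite F{\o}lner set $F\subseteq G^*$ whose associated Loeb measure $\mu$ satisfies $\mu(A^*\cap F)=\BD(A)>0$. Since $G$ is amenable, upper Banach density is bi-$G$-invariant, and this bi-invariance transfers to $\mu$ under the standard translation action.

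Next I would construct the sequences $(b_i)_{i<\omega}$, $(c_i)_{i<\omega}\subseteq G$ by induction on $n$, maintaining the invariant that $E_n := \bigcap_{i<n} b_i^{-1}A$ has positive Banach density. Suppose the invariant holds at stage $n$. The key input, furnished by \cite[Lemma 3.4]{DGJLLM}, is that the set $\{b\in G : \BD(E_n\cap b^{-1}A)>0\}$ is nonempty (indeed, syndetic). Choose $b_n$ in this set, distinct from $b_0,\ldots,b_{n-1}$; then $E_{n+1}$ still has positive Banach density, so I may choose $c_n\in E_{n+1}$ distinct from $c_0,\ldots,c_{n-1}$, which is possible because a set of positive Banach density is automatically infinite. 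By construction $b_i c_n\in A$ for every $i\leq n$, so the resulting sequences witness the $1$-sided productset property.

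The main obstacle, and the entire point of the cited lemma, is the nonvacuity of $\{b\in G : \BD(E_n\cap b^{-1}A)>0\}$: positive density must survive the extra intersection. The argument in \cite{DGJLLM} handles this via a Furstenberg-style correspondence from $(G,\BD)$ to a measure-preserving system, coupled with a Fubini-type calculation against $\mu\times\mu$ using the bi-invariance of $\mu$ to identify a positive-measure set of good translates of $A$. Once that step is imported, the induction closes and Fact \ref{fact:DGJLLM} follows.
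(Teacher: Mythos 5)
Fact \ref{fact:DGJLLM} is stated in the paper as a direct citation to \cite[Lemma~3.4]{DGJLLM} with no proof given, so there is no in-paper argument for the general amenable case to compare against. What the paper does prove is the special case $G=\Z$ (Proposition \ref{positiveBD}), and that proof takes a genuinely different route from yours: it fixes a hyperfinite interval $I$ witnessing $\BD(A)$, locates a single $x\in I$ for which $(A^*-x)\cap\N$ is infinite (using the linear order on $I$ to run a contradiction argument), and then obtains the $c_j$'s by transfer. Your proposal is instead a density-increment induction on the sets $E_n=\bigcap_{i<n}b_i^{-1}A$. These are not the same strategy; indeed, the paper explicitly raises as an open question whether its $\Z$ argument generalizes to arbitrary amenable $G$, which is evidence that the authors do not view your route and theirs as interchangeable.

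There is also a concrete gap in your induction as written. You maintain the invariant ``$\BD(E_n)>0$'' and assert, at each stage, that $\{b\in G : \BD(E_n\cap b^{-1}A)>0\}$ contains a fresh element. But upper Banach density admits no Fubini-type averaging, so this does not follow from $\BD(E_n)>0$ and $\BD(A)>0$ alone. The Furstenberg correspondence only gives the one-sided inequality $\BD\bigl(\bigcap_{i<n} b_i^{-1}A\bigr)\geq\mu\bigl(\bigcap_{i<n} b_i^{-1}\tilde{A}\bigr)$; if you re-derive a correspondence system from $E_n$ afresh at stage $n$, there is no reason the resulting invariant measure assigns positive mass to $\tilde{A}$, and hence no recurrence to exploit. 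The argument needs to fix one dynamical system $(X,\mu,G)$ at the outset, maintain the \emph{measure}-side invariant $\mu\bigl(\bigcap_{i<n}b_i^{-1}\tilde{A}\bigr)>0$ rather than the density-side one, normalize (say $b_0=e$) so that $\tilde{E}_n\subseteq\tilde{A}$, and then invoke Poincar\'e recurrence for the amenable action to supply infinitely many new $b$ with $\mu(\tilde{E}_n\cap b^{-1}\tilde{E}_n)>0$. Your sketch gestures at the correspondence but carries the wrong invariant through the induction; it is also unclear whether \cite[Lemma~3.4]{DGJLLM} is being used as the recurrence step (in which case its statement needs to be pinned down) or as the full conclusion (in which case the induction is redundant).
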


In \cite[Lemma 3.3]{DGJLLM}, a proof is given for $G=\Z$ that is simpler than the proof for a general amenable group.  That being said, \cite[Lemma 3.3]{DGJLLM} has a stronger conclusion than Fact \ref{fact:DGJLLM} and thus we take the opportunity here to give a short proof of Fact \ref{fact:DGJLLM} for $G=\Z$, suggested to us by Renling Jin.  The proof is given in terms of nonstandard analysis and we follow the usual terminology and notations that appear in the literature.  For example, $\N^*$ denotes a sufficiently saturated nonstandard extension of $\N$. 

\begin{proposition}\label{positiveBD}
Given $A\subset\Z$, if $\BD(A)>0$, then $A$ has the $1$-sided sumset property.
\end{proposition}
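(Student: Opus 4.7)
The strategy is to use nonstandard analysis to produce a single nonstandard integer $c^* \in \Z^*$ such that infinitely many of the standard translates $c^* + b$ ($b \in \Z$) lie in $A^*$; the desired pair of infinite sequences in $\Z$ then falls out by transfer and a short induction.

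I would begin by invoking the standard nonstandard characterization of positive upper Banach density to fix a hyperfinite interval $I = [H,K] \subset \Z^*$ of infinite length $N$, with $H$ a positive nonstandard integer so that $I$ has no standard elements, and satisfying $|A^* \cap I| \geq \alpha N$ for some standard $\alpha > 0$. Equip $I$ with the normalized Loeb probability measure $\mu_L$; boundary effects give $\mu_L((A^* - b) \cap I) = \alpha$ for every standard $b \in \Z$, and Fubini yields
\[
\int_I |B_M(c)|\,d\mu_L(c) = (2M+1)\alpha, \qquad B_M(c) := \{b \in [-M,M] \cap \Z : c + b \in A^*\}.
\]
Since $|B_M(c)| \leq 2M+1$, a Markov-type inequality produces a standard $\beta > 0$ (depending only on $\alpha$) such that $\mu_L(\{c \in I : |B_M(c)| \geq M\alpha/2\}) \geq \beta$ for all sufficiently large standard $M$.

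Writing $B(c) := \{b \in \Z : c + b \in A^*\}$, the Loeb-measurable set $\{c : |B(c)| \geq k\}$ contains $\{c : |B_M(c)| \geq M\alpha/2\}$ whenever $M \geq 2k/\alpha$, so has Loeb measure at least $\beta$ for every $k \in \N$. These sets decrease in $k$ to $\{c : |B(c)| = \infty\}$, and continuity of the Loeb measure from above gives $\mu_L(\{c : |B(c)| = \infty\}) \geq \beta > 0$. Any $c^*$ in this set is nonstandard (since $I$ has no standard elements) and has $B(c^*)$ infinite. Enumerating distinct $b_0, b_1, \ldots \in B(c^*)$, one has $c^* + b_i \in A^*$ for every $i$. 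For each $n$, the sentence $\exists y\,(y > M \wedge \bigwedge_{i \leq n}(y + b_i \in A))$ is witnessed in $\Z^*$ by $c^*$ for every standard $M$ and hence, by transfer, holds in $\Z$; this allows an inductive choice of $c_n \in \Z$ satisfying $b_i + c_n \in A$ for all $i \leq n$ and distinct from $c_0, \ldots, c_{n-1}$. The sequences $(b_i)$ and $(c_j)$ then witness the 1-sided sumset property.

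The main obstacle is producing a single $c^*$ with $|B(c^*)|$ infinite. The condition $|B(c)| = \infty$ is external, so a direct saturation argument would be awkward; the Fubini-Markov estimate together with continuity of the Loeb measure from above sidesteps this by delivering a whole positive-measure set of witnesses at once.
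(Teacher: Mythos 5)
Your proof is correct, and it takes a genuinely different route from the paper's. Both arguments are nonstandard, but the paper avoids Loeb measure entirely: it reduces to finding $x\in I$ such that the local density $|A^*\cap[x,x+n)|/n$ is at least $\alpha/2$ for all $n\leq N$ (where $N$ is infinite but $N/|I|\approx 0$) --- such an $x$ automatically has $(A^*-x)\cap\N$ infinite --- and then shows such an $x$ exists by a greedy decomposition: if not, one chops almost all of $I$ into hyperfinitely many blocks of length at most $N$ each of density below $\alpha/2$, contradicting the global density of $A^*$ in $I$. Your version replaces this combinatorial pigeonhole with Loeb measure, a Fubini computation of the expected number of ``hits'' in a window $[-M,M]$, a reverse Markov inequality, and continuity from above to pass to $\{c:|B(c)|=\infty\}$; this makes explicit, and neatly sidesteps, the issue you flag (the externality of the condition $|B(c)|=\infty$ precludes a direct saturation argument), which the paper handles instead by the $\alpha/2$ local-density reformulation. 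Two small cosmetic points: the Loeb measure $\mu_L((A^*-b)\cap I)$ is a constant $\geq\alpha$ rather than necessarily equal to $\alpha$ (but your Markov step only needs it constant and positive), and the transfer sentence should read $|y|>M$ unless you fix $c^*>0$ by working in $\N^*$ as the paper does; neither affects the argument.
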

\begin{proof}
Without loss of generality, $A\subseteq \N$.  Take hyperfinite $I\subseteq \N^*\backslash \N$ such that $|A^*\cap I|/|I|\approx \alpha:=\BD(A)$.  We claim that it suffices to find $x\in I$ such that $(A^*-x)\cap \N$ is infinite.  Indeed, suppose $(b_i)_{i<\omega}$ is an infinite sequence in $\N$ such that $b_i+x\in A^*$ for all $i<\omega$. Then for any $j<\omega$, we may apply transfer to find $c_j\in \N$ such that $b_i+c_j\in A$ for all $i\leq j$.

Write $I=[a,b]$ and fix $N\in \N^*\backslash \N$ such that $N/(b-a)\approx 0$.  In order to find $x\in I$ such that $(A^*-x)\cap \N$ is infinite, it suffices to find $x\in I$ such that $|A^*\cap [x,x+n)|/n\geq \alpha/2$ for all $n\in \N^*$ with $n\leq N$.  Suppose, towards a contradiction, that no such $x\in I$ exists.  We define a hyperfinite sequence $(x_k)_{k\leq K}$ from $I$ as follows.  Set $x_0:=a$.  Suppose that $x_0,\ldots,x_k$ have been constructed such that $x_{i+1}-x_i\leq N$ and $|A^*\cap [x_i,x_{i+1})|/(x_{i+1}-x_i)<\alpha/2$ for all $i<k$.  If $b-x_k<N$, set $x_{k+1}:=b$ and terminate the construction.  Otherwise, set $x_{k+1}\in I$ to be such that $x_{k+1}-x_k\leq N$ and $|A^*\cap [x_k,x_{k+1})|<\alpha/2$.  It follows that 
$$|A^*\cap I|=\sum_{k=0}^{K-2}|A^*\cap [x_k,x_{k+1})|+|A^*\cap [x_{K-1},x_K]|<\frac{\alpha}{2}(x_{K-1}-a)+N,$$ whence
$$\frac{|A^*\cap I|}{|I|}<\frac{\alpha}{2}\cdot \frac{x_{K-1}-a}{b-a}+\frac{N}{b-a}\approx \frac{\alpha}{2},$$ contradicting the choice of $I$.  
\end{proof}

\begin{question}
Can the proof of Proposition \ref{positiveBD} be generalized to the case of an arbitrary amenable group?
\end{question}

Suppose that $G$ is a first-order structure expanding a group and $A\subseteq G$ is definable.  We abuse terminology and say that $A$ is \emph{stable} if the two-variable formula $x\cdot y\in A$ does not have the order property.

\begin{proposition}\label{prop:Ramsey}
Suppose that $G$ is a first-order structure expanding a group and that $A\subseteq G$ is a stable definable set.  Then $A$ has the productset property if and only if it has the $1$-sided productset property.
\end{proposition}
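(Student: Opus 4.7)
\emph{Proof proposal.} The forward direction requires no work: if $B, C \subseteq G$ are infinite with $B \cdot C \subseteq A$, any enumeration $(b_i)_{i<\omega}$ of $B$ and $(c_j)_{j<\omega}$ of $C$ satisfies $b_i \cdot c_j \in A$ for all $i, j$, in particular for $i \leq j$, witnessing the $1$-sided property without any appeal to stability.

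For the converse, let $\phi(x, y)$ denote the formula $x \cdot y \in A$ and fix sequences of pairwise distinct elements $(b_i)_{i<\omega}$, $(c_j)_{j<\omega}$ with $\phi(b_i, c_j)$ holding for all $i \leq j$. My plan is to apply Ramsey's theorem to the $2$-coloring of unordered pairs $\{i, j\} \subseteq \omega$ with $i < j$ that records the truth value of $\phi(b_j, c_i)$, and then to pass to an infinite homogeneous subsequence (which I re-index as $(b_i), (c_j)$). On this subsequence, the truth value of $\phi(b_j, c_i)$ is constant for all $i < j$. If that value is ``false'', then $\phi(b_i, c_j)$ holds exactly when $i \leq j$, which is precisely the order property for $\phi$ and contradicts the stability hypothesis on $A$. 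Hence the constant value must be ``true'', and combining it with the $1$-sided hypothesis yields $\phi(b_i, c_j)$ for all $i, j$. Taking $B := \{b_i : i < \omega\}$ and $C := \{c_j : j < \omega\}$ then gives infinite sets with $B \cdot C \subseteq A$.

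I do not anticipate a genuine obstacle: this is a single application of Ramsey, with stability invoked exactly once to eliminate the ordered case. The one minor point to check is that the witnessing sequences in the $1$-sided productset property may be assumed to consist of pairwise distinct elements, which is the natural reading of the definition --- otherwise the $1$-sided property would be trivially witnessed for any nonempty $A$ by taking $b_i = e$ and $c_j = a$ for some fixed $a \in A$. Granting this mild refinement, the Ramsey-plus-stability plan above completes the argument.
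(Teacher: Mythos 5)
Your proof is correct and follows essentially the same route as the paper's: a single application of Ramsey's theorem to the coloring of the lower-triangular pairs $(i,j)$ with $i>j$ by the truth value of $b_i\cdot c_j\in A$, then using stability to rule out the homogeneous ``false'' color (which would give the order property) and combining the homogeneous ``true'' color with the $1$-sided hypothesis. Your closing observation that the witnessing sequences must be read as having pairwise distinct (or at least infinitely many distinct) terms is the intended reading, and is consistent with how the paper constructs such witnesses elsewhere.
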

\begin{proof}
One direction is trivial. For the other direction, fix a stable definable set $A\subseteq G$ which has the $1$-sided productset property witnessed by infinite sequences $(b_i)_{i<\omega}$ and $(c_i)_{i<\omega}$ in $G$. Let $P_1=\{(i,j):i>j,~b_i\cdot c_j\in A\}$ and $P_2=\{(i,j):i>j,~b_i\cdot c_j\not\in A\}$. By Ramsey's theorem, there is an infinite set $I$ of indices and some $t\in\{1,2\}$ such that $(i,j)\in P_t$ for all $i,j\in I$ with $i>j$. If $t=2$, then $(b_i)_{i\in I}$ and $(c_i)_{i\in I}$ witness the order property for $x\cdot y\in A$, which is a contradiction. Therefore $t=1$, and so, setting $B=\{b_i:i\in I\}$ and $C=\{c_i:i\in I\}$, we have $B\cdot C\subseteq A$.
\end{proof}

Fact \ref{fact:DGJLLM} and Proposition \ref{prop:Ramsey} yield the productset property for stable definable sets of positive Banach density in expansions of amenable groups.

\begin{theorem}
Suppose that $G$ is a first-order structure expanding a discrete amenable group and that $A\subseteq G$ is a stable definable set.  If $\BD(A)>0$, then there are infinite $B,C\subseteq G$ such that $B\cdot C\subseteq A$.
\end{theorem}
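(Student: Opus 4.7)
The plan is to simply combine the two preceding results, Fact \ref{fact:DGJLLM} and Proposition \ref{prop:Ramsey}, which together already do all the work. The hypothesis that $G$ expands a discrete amenable group and $\BD(A)>0$ is exactly what is needed to invoke Fact \ref{fact:DGJLLM}, yielding the $1$-sided productset property for $A$. The hypothesis that $A$ is stable (i.e., the formula $x\cdot y\in A$ lacks the order property) is exactly what is needed to invoke Proposition \ref{prop:Ramsey}, which upgrades the $1$-sided productset property to the full productset property.

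So the proof is essentially two lines. First, by Fact \ref{fact:DGJLLM} applied to $G$ viewed as an abstract amenable group, the assumption $\BD(A)>0$ gives infinite sequences $(b_i)_{i<\omega}$ and $(c_i)_{i<\omega}$ in $G$ with $b_i\cdot c_j\in A$ for all $i\leq j<\omega$. Second, since $A$ is stable, Proposition \ref{prop:Ramsey} converts this into the existence of infinite $B,C\subseteq G$ with $B\cdot C\subseteq A$.

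There is no genuine obstacle here, since the nontrivial content has been relegated to Fact \ref{fact:DGJLLM} (an ergodic/nonstandard input from \cite{DGJLLM}) and Proposition \ref{prop:Ramsey} (a Ramsey-theoretic argument using the order property). The one point worth double-checking is just that the definable and first-order structure on $G$ plays no additional role beyond allowing us to speak of stability: Fact \ref{fact:DGJLLM} does not care about the expansion, and Proposition \ref{prop:Ramsey} uses only stability of the single formula $x\cdot y\in A$. Thus no further model-theoretic bookkeeping is required, and the theorem follows immediately.
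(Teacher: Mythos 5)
Your proof is correct and is exactly the paper's argument: the theorem is stated immediately after, and as a direct consequence of, Fact \ref{fact:DGJLLM} and Proposition \ref{prop:Ramsey}, which you combine in precisely the same way.
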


\begin{remark}
In the model-theoretic setting, a first-order structure expanding a group $G$ is \emph{definably amenable} if there is a finitely additive left-invariant probability measure on the definable subsets of $G$. It is worth emphasizing that, in this paper, we do not consider this weaker notion of amenability.
\end{remark}

\section{Substantial subsets of groups}\label{sec:PSP}

\subsection{The productset property and coheir substantiality}\label{sec:ch}
 The original motivation for the present paper comes from the following unpublished observation of Mauro DiNasso.  We thank him for his permission in allowing us to include this result and its proof.

\begin{proposition}\label{prop:MDN}
$A\subseteq \Z$ has the sumset property if and only if there are nonprincipal ultrafilters $\mathcal{U}$ and $\mathcal{V}$ on $\Z$ such that $A\in (\mathcal{U}\oplus \mathcal{V})\cap (\mathcal{V}\oplus \mathcal{U})$.
\end{proposition}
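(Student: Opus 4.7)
The plan is to prove both directions by unpacking the definition of the ultrafilter sum: $A\in\mathcal{U}\oplus\mathcal{V}$ means $\{b : A-b\in\mathcal{V}\}\in\mathcal{U}$ (equivalently $\{b : \{c : b+c\in A\}\in\mathcal{V}\}\in\mathcal{U}$), and symmetrically for $\mathcal{V}\oplus\mathcal{U}$.

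The forward direction is the easy one. Given infinite $B,C\subseteq\Z$ with $B+C\subseteq A$, I would let $\mathcal{U}$ be any nonprincipal ultrafilter on $\Z$ with $B\in\mathcal{U}$ and $\mathcal{V}$ any nonprincipal ultrafilter on $\Z$ with $C\in\mathcal{V}$ (these exist since $B,C$ are infinite). For each $b\in B$ we have $C\subseteq A-b$, so $A-b\in\mathcal{V}$; hence $\{b : A-b\in\mathcal{V}\}\supseteq B\in\mathcal{U}$, which gives $A\in\mathcal{U}\oplus\mathcal{V}$. The argument that $A\in\mathcal{V}\oplus\mathcal{U}$ is symmetric.

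For the backward direction, I would build the witnessing sets by alternating construction. Set
\[
U := \{b : A-b\in\mathcal{V}\}\in\mathcal{U}, \qquad V := \{c : A-c\in\mathcal{U}\}\in\mathcal{V}.
\]
I will construct $b_0,c_0,b_1,c_1,\ldots$ maintaining the invariants that $b_i\in U$, $c_i\in V$, and $b_i+c_j\in A$ for all $i,j$. Start by picking any $b_0\in U$, so $A-b_0\in\mathcal{V}$. Then pick $c_0\in V\cap(A-b_0)$, which lies in $\mathcal{V}$ hence is nonempty; this forces $b_0+c_0\in A$ and $A-c_0\in\mathcal{U}$. At stage $n$, having chosen distinct $b_0,\ldots,b_{n-1}$ and $c_0,\ldots,c_{n-1}$ meeting the invariants, the set
\[
U \cap \bigcap_{j<n}(A-c_j)
\]
is a finite intersection of sets in $\mathcal{U}$ and so lies in $\mathcal{U}$; since $\mathcal{U}$ is nonprincipal, it is infinite and I can choose $b_n$ in it distinct from $b_0,\ldots,b_{n-1}$. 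The choice of $c_n\in V\cap\bigcap_{i\le n}(A-b_i)$, distinct from earlier $c_j$, is made in the analogous way using $\mathcal{V}$. Then $B:=\{b_i\}$ and $C:=\{c_j\}$ are infinite with $B+C\subseteq A$.

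There is no serious obstacle here: the only subtlety is getting the bookkeeping straight so that each newly chosen $b_n$ sums correctly with every previously chosen $c_j$ and vice versa, which is exactly what the two hypotheses $A\in\mathcal{U}\oplus\mathcal{V}$ and $A\in\mathcal{V}\oplus\mathcal{U}$ are designed to deliver through the sets $U$ and $V$. Nonprincipality of both ultrafilters is used to ensure that at each stage the relevant intersection is infinite, so we can pick a fresh element and keep $B$ and $C$ growing.
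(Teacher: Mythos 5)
Your proof is correct and takes essentially the same approach as the paper: the forward direction chooses nonprincipal ultrafilters concentrating on $B$ and $C$ and unpacks the definition of $\oplus$, and the backward direction carries out the same alternating greedy construction using the sets $A-\mathcal V=\{b:A-b\in\mathcal V\}\in\mathcal U$ and $A-\mathcal U\in\mathcal V$. The only (cosmetic) differences are that your forward direction gets by with just $B\in\mathcal U$ and $C\in\mathcal V$ rather than extending the larger families $\{B\}\cup\{A-c:c\in C\}$ and $\{C\}\cup\{A-b:b\in B\}$, and that you already constrain $c_0$ to lie in $A-b_0$ at the initial step, which is slightly tidier than the paper's presentation.
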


Here, $\mathcal{U}\oplus \mathcal{V}$ is the ultrafilter on $\mathbb{Z}$ defined by setting $A\in \mathcal{U}\oplus \mathcal{V}$ if and only if $A-\mathcal{V}\in \mathcal{U}$, where $A-\mathcal{V}:=\{k\in \mathbb{Z} \ : \ A-k\in \mathcal{V}\}$.

\begin{proof}[Proof of Proposition \ref{prop:MDN}]
First suppose that $B,C\subseteq \mathbb{Z}$ are infinite and that $B+C\subseteq A$.  It follows that the family $\{B\}\cup \{A-c \ : \ c\in C\}$ has the finite intersection property, whence there is a nonprincipal ultrafilter $\mathcal{U}$ on $\mathbb{Z}$ extending this family.  In the same way, there is a nonprincipal ultrafilter $\mathcal{V}$ on $\mathbb{Z}$ extending the family $\{C\}\cup \{A-b \ : \ b\in B\}$.  Since $B\subseteq A-\mathcal{V}$ and $B\in \mathcal{U}$, we have that $A-\mathcal{V}\in \mathcal{U}$, that is, $A\in \mathcal{U}\oplus \mathcal{V}$; the argument that $A\in \mathcal{V}\oplus \mathcal{U}$ is identical.

Now suppose that $A\in (\mathcal{U}\oplus \mathcal{V})\cap (\mathcal{V}\oplus \mathcal{U})$ for nonprincipal ultrafilters $\mathcal{U}$ and $ \mathcal{V}$ on $\mathbb{Z}$.  Pick $b_0\in A-\mathcal{V}$ and $c_0\in A-\mathcal{U}$ arbitrarily.  Now pick $b_1\in (A-\mathcal{V})\cap (A-c_0)$, with $b_1\neq b_0$, and $c_1\in (A-\mathcal{U})\cap (A-b_0)\cap (A-b_1)$, with $c_1\neq c_0$.  Now pick $b_2\in (A-\mathcal{V})\cap (A-c_0)\cap (A-c_1)$, with $b_2\not\in\{b_0,b_1\}$, and then $c_2\in(A-\mathcal{U})\cap (A-b_0)\cap (A-b_1)\cap (A-b_2)$, with $c_2\not\in\{c_0,c_1\}$.  Continuing in this way, we get infinite sequences $(b_i)_{i<\omega}$ and $(c_j)_{j<\omega}$ such that $b_i+c_j\in A$ for all $i,j<\omega$.
\end{proof}

\begin{remark}\label{rem:MDN}
Proposition \ref{prop:MDN} also holds in an arbitrary abelian group (with virtually the same proof).
\end{remark}

For the rest of Section \ref{sec:PSP}, we let $G$ denote a fixed first-order expansion of a group. We also let $T$ denote the complete theory of $G$ and we let $\G$ be a sufficiently saturated monster model of $T$. We write $A\subset \G$ to mean $A$ is a ``small" subset, in the sense that $\G$ is $|A|^+$-saturated.  Unless otherwise specified, we use $\varphi(x)$ to denote a formula in the single variable $x$ with parameters from $G$. We say that a formula $\varphi(x)$ has the \emph{$1$-sided productset property} (resp. \emph{productset property}) if the set $\varphi(G)$ has the $1$-sided productset property (resp. productset property). 

\begin{remark}
In this model-theoretic context, when we say a formula $\varphi(x)$ has the ($1$-sided) productset property, it is worth emphasizing that we \emph{do not} require the witnessing sets $B$ and $C$ to be definable. For example, suppose $G$ is strongly minimal. Then any infinite definable subset of $G$ is cofinite, and thus has the productset property. On the other hand, the only subset of $G$ with the productset property witnessed by definable $B$ and $C$ is $G$ itself (since, in any group, the product of two cofinite sets is the whole group). In Section \ref{sec:distal}, we will consider definable versions of the productset property in the setting of \emph{distal} groups.
\end{remark}

Much of the work in this section is motivated by the consideration of Proposition \ref{prop:MDN} in light of certain analogies between types and ultrafilters. First, recall that there is a map $(\mathcal{U},\mathcal{V})\mapsto \mathcal{U}\otimes \mathcal{V}$ from pairs of ultrafilters on a set $X$ to ultrafilters on $X^2$ given by declaring, for $E\subseteq X^2$, that $E\in \mathcal{U}\otimes \mathcal{V}$ if and only if $\{x\in X  :  E_x\in \mathcal{V}\}\in \mathcal{U}$, where $E_x$ is the fiber of $E$ over $x$, that is, $E_x:=\{y\in X  :  (x,y)\in E\}$.  In the case $X=\mathbb{Z}$, the operation $\oplus$ above is thus the pushforward of the operation $\otimes$ under the map $(x,y)\mapsto x+y$. 

In the model-theoretic setting, there is an operation $\otimes$ on global types that is meant to mimic the operation $\otimes$ on ultrafilters. Specifically, fix $A\subset\G$ and suppose $p,q\in S_1(\G)$ are global types such that $p$ is $A$-invariant. Define the global type $p(x)\otimes q(y)$ so that, given $A\subseteq B\subset\G$ and a formula $\theta(x,y)$ with parameters in $B$, $\theta(x,y)\in p(x)\otimes q(y)$ if and only if $\theta(x,c)\in p$ for some (any) $c\models q|_B$. (See \cite[Section 2.2]{simon} for details.) For any $A\subset\G$, there is also a surjective map from ultrafilters on $A$ to global types in $S_1(\G)$ which are finitely satisfiable in $A$, given by $\mathcal{U}\mapsto \{\theta(x,c):\theta(A,c)\in\mathcal{U}\}$ (see \cite[Example 2.1.7]{simon}). Finally, note that if $p\in S_1(\G)$ is finitely satisfiable in some set $A\subset\G$, then $p$ is $A$-invariant. Altogether, this motivates the following model-theoretic interpretation of Proposition \ref{prop:MDN}.

\begin{theorem}\label{thm:PSP}
A formula $\varphi(x)$ has the productset property if and only if there are nonalgebraic global types $p,q\in S_1(\G)$ such that $p,q$ are finitely satisfiable in $G$ and $\varphi(x\cdot y)\in (p(x)\otimes q(y))\cap(q(y)\otimes p(x))$.
\end{theorem}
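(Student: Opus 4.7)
The plan is to mirror DiNasso's proof of Proposition \ref{prop:MDN}: the forward direction manufactures the global types $p, q$ out of the witnessing sets $B, C$, while the backward direction runs an alternating inductive construction inside $G$ to extract $B, C$ from $p, q$, using finite satisfiability as the main tool throughout.

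For the forward direction, assume $B, C \seq G$ are infinite with $B \cdot C \seq \varphi(G)$, and form the partial type
$$\Sigma_p(x) := \{\varphi(x \cdot c) : c \in C\} \cup \{x \neq g : g \in G\}.$$
Any finite subset is realized by any $b \in B$ avoiding the finitely many excluded $g$'s, so $\Sigma_p$ is finitely satisfiable in $B$; by the standard ultrafilter argument, $\Sigma_p$ extends to a complete global type $p \in S_1(\G)$ that remains finitely satisfiable in $B$ (and is automatically nonalgebraic, since a realization of $x = d \in p$ for $d \in \G$ would, by finite satisfiability in $G$, force $d \in G$, contradicting $x \neq d \in p$). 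Construct $q$ symmetrically from $\{\varphi(b \cdot y) : b \in B\} \cup \{y \neq g : g \in G\}$, obtaining a nonalgebraic global type finitely satisfiable in $C$. To verify $\varphi(x \cdot y) \in p(x) \otimes q(y)$, pick $c \models q|_G$; by construction $\varphi(b \cdot y) \in q$ for each $b \in B$, and since $\varphi(b \cdot y)$ has parameters in $G$, this gives $\varphi(b \cdot c)$ for every such $b$. If $\neg \varphi(x \cdot c)$ were in $p$, finite satisfiability of $p$ in $B$ would yield some $b \in B$ with $\neg \varphi(b \cdot c)$, a contradiction. The verification of $\varphi(x \cdot y) \in q(y) \otimes p(x)$ is symmetric.

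For the backward direction, fix realizations $c^* \models q|_G$ and $a^* \models p|_G$ in $\G$; the hypothesis then gives $\varphi(x \cdot c^*) \in p$ and $\varphi(a^* \cdot y) \in q$. Construct $b_0, c_0, b_1, c_1, \ldots \in G$ by induction, maintaining the invariants that the $b_i$'s are pairwise distinct, the $c_j$'s are pairwise distinct, $b_i \cdot c_j \in \varphi(G)$ for all chosen indices, $\varphi(b_i \cdot y) \in q$ for each $i$, and $\varphi(x \cdot c_j) \in p$ for each $j$. To produce $b_{k+1}$, observe that the formula
$$\varphi(x \cdot c^*) \wedge \bigwedge_{j \leq k} \varphi(x \cdot c_j) \wedge \bigwedge_{i \leq k}(x \neq b_i)$$
lies in $p$: its conjuncts do, respectively by the hypothesis on $c^*$, by the invariant, and by nonalgebraicity of $p$. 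Finite satisfiability of $p$ in $G$ then yields some $b_{k+1} \in G$ satisfying this formula. The invariant $\varphi(b_{k+1} \cdot y) \in q$ follows automatically: since $b_{k+1} \in G$, the formula $\varphi(b_{k+1} \cdot y)$ is over $G$, and $\varphi(b_{k+1} \cdot c^*)$ holds with $c^* \models q|_G$, so $\varphi(b_{k+1} \cdot y) \in \tp(c^*/G) = q|_G \seq q$. The construction of $c_{k+1}$ is symmetric, using $a^*$ and finite satisfiability of $q$ in $G$. Setting $B := \{b_i : i < \omega\}$ and $C := \{c_j : j < \omega\}$ then witnesses the productset property.

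The main obstacle lies in the backward direction, specifically in designing the invariants so that the formula we need to realize at each step actually belongs to $p$ (or $q$), allowing finite satisfiability to supply an element of $G$ while nonalgebraicity maintains distinctness. Anchoring the construction to the fixed realizations $a^*, c^*$ of $p|_G, q|_G$ is the device that makes the auxiliary conditions $\varphi(b_i \cdot y) \in q$ and $\varphi(x \cdot c_j) \in p$ propagate cleanly through the induction.
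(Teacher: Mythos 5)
Your proof is correct, and it takes essentially the same route as the paper's: the forward direction manufactures $p,q$ finitely satisfiable in $B,C$ and checks the $\otimes$-condition, while the backward direction alternates applications of finite satisfiability in $G$ to build the sequences $(b_i)$ and $(c_j)$. There is one small but pleasant simplification in your backward direction that is worth pointing out. The paper passes to a sufficiently saturated extension of $\G$ in order to realize the full product types $p(x)\otimes q(y)$ and $q(y)\otimes p(x)$, then works with the restricted types $p^*=p|_{\G c^*}$ and $q'=q|_{\G b'}$. You avoid the larger monster entirely by only realizing $p|_G$ and $q|_G$ inside $\G$, taking $a^*\models p|_G$ and $c^*\models q|_G$ as anchors; the commuting condition $\varphi(x\cdot y)\in (p\otimes q)\cap(q\otimes p)$ is exactly what lets you run the induction directly with $p$ and $q$ (via the ``re-entry'' observation that $\varphi(b_{k+1}\cdot y)\in q|_G\subseteq q$ once $b_{k+1}\in G$ satisfies $\varphi(x\cdot c^*)$, and dually for $p$). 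Both proofs are correct; yours has slightly less bookkeeping since it never leaves $\G$.
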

\begin{proof}
First, suppose $\varphi(x)$ has the productset property, and let $B,C\seq G$ be infinite with $B\cdot C\seq\varphi(G)$. Since $B$ and $C$ are infinite, the type $\{x\neq g:g\in G\}$ is finitely satisfiable in both $B$ and $C$, and thus extends to global types $p,q\in S_1(\G)$, which are finitely satisfiable in $B$ and $C$, respectively. Note then that $p,q$ are necessarily nonalgebraic. For a contradiction, suppose $\neg\varphi(x\cdot y)\in p(x)\otimes q(y)$. Let $c^*\models q|_G$. Then $\neg\varphi(x\cdot c^*)\in p$ so there is $b\in B$ such that $\neg\varphi(b\cdot c^*)$ holds. Then $\neg\varphi(b\cdot y)\in q$, and so there is $c\in C$ such that $\neg\varphi(b\cdot c)$ holds, which is a contradiction. By a similar argument, $\varphi(x\cdot y)\in q(y)\otimes p(x)$. 

Now suppose $p,q\in S_1(\G)$ satisfy the conditions of the theorem. In a sufficiently saturated extension of $\G$, let $(b^*,c^*)\models p(x)\otimes q(y)$ and let $(b',c')\models q(y)\otimes p(x)$. By assumption, $\varphi(b^*\cdot c^*)$ and $\varphi(b'\cdot c')$ hold. Let $p^*=p|\G c^*$ and $q'=q|\G b'$. Note that $b^*\models p^*$, $c'\models q'$, and $p^*,q'$ are nonalgebraic and finitely satisfiable in $G$.  Fix $n<\omega$ and suppose we have constructed sequences $(b_i)_{i<n}$ and $(c_j)_{j<n}$ such that $\varphi(b_i\cdot c_j)$ holds for all $i,j<n$, $\varphi(b_i\cdot c^*)$ for all $i<n$, and $\varphi(b^*\cdot c_j)$ holds for all $j<n$. The formula
\[
\bigwedge_{j<n}\varphi(x\cdot c_j)\wedge\varphi(x\cdot c^*)\wedge\bigwedge_{i<n}x\neq b_i
\]
is realized by $b^*$ and thus is in $p^*$. So we may find a realization $b_n$ in $G$. Since $c^*\equiv_G c'$, we have $\varphi(b_i\cdot c')$ for all $i\leq n$. So the formula
\[
\bigwedge_{i\leq n}\varphi(b_i\cdot y)\wedge\varphi(b'\cdot y)\wedge\bigwedge_{j<n}y\neq c_j
\]
is realized by $c'$ and thus is in $q'$. So we may find a realization $c_n$ in $G$. Since $b^*\equiv_G b'$, we have $\varphi(b^*\cdot c_n)$. This constructs infinite $B,C$ such that $B\cdot C\seq\varphi(G)$.
\end{proof}

In the next subsection, we will analyze the productset property in the context of model-theoretic genericity in stable and simple theories. Toward this end, a natural question is when, in the above characterization of the productset property, we can choose realizations of the types $p$ and $q$, which are mutually independent with respect to finite satisfiability. This motives the next definition.

\begin{definition}\label{def:cs}
A formula $\varphi(x)$ is \emph{$1$-sided coheir substantial} (resp. \emph{coheir substantial}) if there are $b,c\in \G\backslash G$ such that $\varphi(b\cdot c)$ holds and $\tp(b/Gc)$ is (resp. $\tp(b/Gc)$ and $\tp(c/Gb)$ are) finitely satisfiable in $G$.
\end{definition}

\begin{theorem}\label{thm:1ch}
Fix a formula $\varphi(x)$. The following are equivalent:
\begin{enumerate}[$(i)$]
\item $\varphi(x)$ has the $1$-sided productset property.
\item $\varphi(x)$ is $1$-sided coheir substantial.
\item There are nonalgebraic global types $p,q\in S_1(\G)$ such that $p$ is finitely satisfiable in $G$ and $\varphi(x\cdot y)\in p(x)\otimes q(y)$.
\end{enumerate}
\end{theorem}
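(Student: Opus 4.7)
My plan is to establish the cycle $(i)\Rightarrow(iii)\Rightarrow(ii)\Rightarrow(i)$, adapting the argument for Theorem~\ref{thm:PSP} but taking advantage of the one-sided structure.

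For $(i)\Rightarrow(iii)$, I would start with infinite distinct sequences $(b_i),(c_j)$ in $G$ such that $\varphi(b_i\cdot c_j)$ for all $i\leq j$. The partial type $\{\varphi(b_i\cdot y):i<\omega\}\cup\{y\neq a:a\in\G\}$ is finitely satisfiable in $\{c_j\}$ (for any finite subpiece, take $c_j$ with $j$ large and avoiding the finitely many $a$'s), so extends to a complete global type $q$, which is then automatically nonalgebraic. I then let $p\in S_1(\G)$ be any nonalgebraic global type finitely satisfiable in $\{b_i\}$. To check $\varphi(x\cdot y)\in p\otimes q$, I realize $c^*\models q|G$ in $\G$; since each $b_i$ lies in $G$, the formulas $\varphi(b_i\cdot y)$ belong to $q|G$, so $\varphi(b_i\cdot c^*)$ holds for every $i$. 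Finite satisfiability of $p$ in $\{b_i\}$ then forces $\varphi(x\cdot c^*)\in p$, as required.

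For $(iii)\Rightarrow(ii)$, I would use saturation of $\G$ to realize $c\models q|G$ and then $b\models p|Gc$. Nonalgebraicity of $p$ and $q$ forces $b,c\in\G\setminus G$; the definition of $p\otimes q$ together with $G$-invariance of $p$ give $\varphi(x\cdot c)\in p$ and hence $\varphi(b\cdot c)$; and $\tp(b/Gc)=p|Gc$ inherits finite satisfiability in $G$ from $p$.

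The main obstacle is $(ii)\Rightarrow(i)$, where only one side carries finite satisfiability. My plan is to use two distinct ingredients in succession: finite satisfiability of $\tp(b/Gc)$ in $G$ to produce the $b_i$'s, and elementarity $G\preceq\G$ to produce the $c_j$'s. Given the hypothesis of $(ii)$, I would inductively choose distinct $b_n\in G$ realizing the formula $\varphi(x\cdot c)\wedge\bigwedge_{j<n}x\neq b_j$, which lies in $\tp(b/Gc)$ because $b\notin G$. Having built the entire sequence $(b_i)$, I would inductively choose distinct $c_n\in G$ realizing $\bigwedge_{i\leq n}\varphi(b_i\cdot y)\wedge\bigwedge_{j<n}y\neq c_j$; this formula has all parameters in $G$ and is satisfied by $c$ in $\G$, so $G\preceq\G$ yields a realization in $G$. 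The resulting sequences witness the $1$-sided productset property. The crucial feature, distinguishing this from the two-sided setting of Theorem~\ref{thm:PSP}, is that in the one-sided setting we are free to finish the $b_i$'s before starting any $c_j$, so the formulas realized in the second stage have parameters entirely in $G$, and elementarity substitutes for finite satisfiability of a second type.
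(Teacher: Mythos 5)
Your proof is correct. You traverse the equivalence cycle in the opposite direction from the paper, which goes $(i)\Rightarrow(ii)\Rightarrow(iii)\Rightarrow(i)$. Your $(i)\Rightarrow(iii)$ constructs $p$ and $q$ directly from the witnessing sequences (with $q$ even finitely satisfiable in $G$, which is more than $(iii)$ asks for) and verifies $\varphi(x\cdot y)\in p(x)\otimes q(y)$ by the same contrapositive finite-satisfiability argument that appears in the paper's proof of Theorem \ref{thm:PSP}; the paper instead first uses saturation to realize $\{\varphi(b_i\cdot y):i<\omega\}$ by a single element $c$, thereby establishing $(ii)$, and only then extracts the types. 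Your $(ii)\Rightarrow(i)$ uses exactly the same two ingredients as the paper's $(iii)\Rightarrow(i)$ (which implicitly contains $(iii)\Rightarrow(ii)$ as its first lines): finite satisfiability of $\tp(b/Gc)$ in $G$ to produce the $b_i$'s, and elementarity $G\prec\G$ to produce the $c_j$'s. The one stylistic difference is that you complete the entire sequence $(b_i)$ before constructing any $c_j$, whereas the paper interleaves the two stages; your remark that the one-sided condition $i\leq j$ is precisely what licenses this decoupling is a pleasant clarification, though the underlying argument is the same.
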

\begin{proof}
$(i)\Rightarrow (ii)$: Assume that $\varphi(x)$ has the $1$-sided productset property as witnessed by the infinite sequences $(b_i)_{i<\omega}$ and $(c_i)_{i<\omega}$ from $G$. By saturation, we may find $c\in \G\backslash G$ such that $\varphi(b_i\cdot c)$ holds for all $i<\omega$. Then $\{\varphi(x\cdot c)\}\cup \{x\neq g:g\in G\}$ is finitely satisfiable in $G$, and thus extends to a complete type $p\in S_1(Gc)$ which is finitely satisfiable in $G$. If $b\in \G$ is a realization of $p$, then $b,c\in \G\backslash G$ witness that $\varphi(x)$ is $1$-sided coheir substantial. 

$(ii)\Rightarrow (iii)$: Assume that $\varphi(x)$ is coheir substantial as witnessed by $b,c\in \G\backslash G$. We may extend $\tp(b/Gc)$ to a global type $p\in S_1(\G)$, which is finitely satisfiable in $G$. Since $b\not\in G$, it follows that $p$ is nonalgebraic. Since $\tp(b/Gc)$ is finitely satisfiable in $G$, it follows that $c\not\in\text{acl}(Gb)$, and so we may extend $\tp(c/Gb)$ to a nonalgebraic global type $q\in S_1(\G)$. Since $c\models q|_G$ and $\varphi(x\cdot c)\in p$, we have $\varphi(x\cdot y)\in p(x)\otimes q(y)$, as desired. 

$(iii)\Rightarrow (i)$:  Assume that $\varphi(x)$ satisfies $(iii)$ as witnessed by $p,q\in S_1(\G)$. Let $c\in\G\backslash G$ realize $q|_G$. Since $\varphi(x\cdot y)\in p(x)\otimes q(y)$, we have $\varphi(x\cdot c)\in p$. Let $b\models p|_{Gc}$. Then $\varphi(b\cdot c)$ holds and $\tp(b/Gc)$ is finitely satisfiable in $G$ (since it is contained in $p$). We inductively construct infinite sequences $(b_i)_{i<\omega}$ and $(c_i)_{i<\omega}$ in $G$ such that $\varphi(b_i\cdot c_j)$ holds for all $i\leq j<\omega$. In particular, fix $n<\omega$ and assume we have distinct $(b_i)_{i<n}$ and distinct $(c_i)_{i<n}$ such that $\varphi(b_i\cdot c_j)$ holds for $i\leq j<n$ and $\varphi(b_i\cdot c)$ holds for $i<n$. The formula $\varphi(x\cdot c)\wedge\bigwedge_{i<n}x\neq b_i$ is in $\tp(b^*/Gc)$ and thus realized by some $b_n\in G$. Then $\G\models \bigwedge_{i\leq n}\varphi(b_i\cdot c)\wedge\bigwedge_{j<n}c\neq c_j$ and so, by elementarity, there is $c_n\in G\backslash(c_j)_{j<n}$ such that $\varphi(b_i\cdot c_n)$ holds for all $i\leq n$. 
\end{proof}

When $T$ is stable, the ``$2$-sided" version of the previous result holds.

\begin{theorem}\label{thm:ch}
Fix a formula $\varphi(x)$.
\begin{enumerate}[$(a)$]
\item If $\varphi(x)$ is coheir substantial then it has the productset property.
\item Assume $T$ is stable. Then $\varphi(x)$ is coheir substantial if and only if it has the productset property.
\end{enumerate}
\end{theorem}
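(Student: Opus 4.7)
For part (a), the plan is to reduce directly to Theorem~\ref{thm:PSP}. Given witnesses $b, c \in \G \setminus G$ to coheir substantiality of $\varphi(x)$, extend $\tp(b/Gc)$ to a global type $p \in S_1(\G)$ finitely satisfiable in $G$ by the standard compactness argument, and likewise extend $\tp(c/Gb)$ to such a $q \in S_1(\G)$. Both $p$ and $q$ are nonalgebraic: since $G \preceq \G$, any algebraic formula $\psi(x)$ over $G$ has all its finitely many realizations inside $G$, so $\psi \in p$ would force $(x = a) \in p$ for some $a \in G$, contradicting $b \notin G$ and $b \models p|_G$. To verify $\varphi(x \cdot y) \in p(x) \otimes q(y)$, note that $c \models q|_G$ (as $q|_G = \tp(c/G)$) and that $\varphi(x \cdot c) \in \tp(b/Gc) \subseteq p$ is immediate from $\varphi(b \cdot c)$; the containment in $q(y) \otimes p(x)$ is symmetric. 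Theorem~\ref{thm:PSP} then delivers the productset property.

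For part (b), the forward direction is part (a), which used no stability hypothesis. For the converse, assume $T$ is stable and $\varphi(x)$ has the productset property. A fortiori $\varphi$ has the 1-sided productset property, so Theorem~\ref{thm:1ch} furnishes $b, c \in \G \setminus G$ with $\varphi(b \cdot c)$ holding and $\tp(b/Gc)$ finitely satisfiable in $G$. It remains to show that $\tp(c/Gb)$ is also finitely satisfiable in $G$, and this is precisely where stability enters: over a model of a stable theory, finite satisfiability of a type coincides with nonforking, and nonforking independence is symmetric over models. Thus $\tp(b/Gc)$ finitely satisfiable in $G$ gives nonforking of $b$ from $c$ over $G$, whence by symmetry $\tp(c/Gb)$ also does not fork over $G$, and is therefore finitely satisfiable in $G$.

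The main obstacle---really the only substantive step---is the symmetry argument in part (b). Outside of the stable setting, coheir independence is typically asymmetric, which is why one cannot hope to drop the stability hypothesis from the converse by this method. Part (a), by contrast, is essentially a straightforward packaging of the coheir witnesses into global types and an invocation of Theorem~\ref{thm:PSP}; the only minor wrinkle is verifying nonalgebraicity, which falls out of elementarity.
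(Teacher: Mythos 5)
Your proof is correct and follows essentially the same route as the paper. For part (a), the paper extends the coheir witnesses $b,c$ to global types $p,q$ finitely satisfiable in $G$ and verifies $\varphi(x\cdot y)\in p(x)\otimes q(y)$ and $\varphi(x\cdot y)\in q(y)\otimes p(x)$, exactly as you do; you make the invocation of Theorem \ref{thm:PSP} explicit, which is if anything slightly cleaner than the paper's appeal to ``follow the proof of Theorem \ref{thm:1ch}[$(iii)\Rightarrow(i)$].'' For part (b), the paper likewise argues that in a stable theory coheir independence is symmetric over models, so $1$-sided coheir substantial upgrades to coheir substantial; your unpacking through nonforking is a faithful expansion of the same one-line argument.
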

\begin{proof}
Part $(a)$. Follow the proof of Theorem \ref{thm:1ch}[$(iii)\Rightarrow (i)$], with $q\in S_1(\G)$ chosen to be finitely satisfiable in $M$. We have $\varphi(x\cdot y)\in p(x)\otimes q(y)$ by the same argument. Since $b\models p|_G$ and $\varphi(b\cdot y)\in q$, we also have $\varphi(x,y)\in q(y)\otimes p(x)$.

Part $(b)$. We only need to show the reverse implication. Since $T$ is stable, coheir independence satisfies symmetry (over models), and so coheir substantial coincides with $1$-sided coheir substantial. Thus the result follows from Theorem \ref{thm:1ch}.
\end{proof}

Part $(b)$ of the previous result can fail in general (we thank Pierre Simon for this observation), as witnessed by the following example.

\begin{remark}\label{rem:counter}
Let $G=(\Z,+,<,0)$ and let $\varphi(x)$ be $x>0$ (which clearly has the sumset property). For a contradiction, suppose there are $b,c\in \G\backslash G$ such $b+c>0$ and $\tp(b/\Z c)$ and $\tp(c/\Z b)$ are both finitely satisfiable in $\Z$. Without loss of generality, suppose $b\leq c$. Then $b\leq y$ and $b+y>0$ are both realized in $\Z$, which is impossible. 
\end{remark}

%\begin{question}\label{ques:chstab}
%Does Theorem \ref{thm:ch}$(b)$ hold under the assumption that $T$ is simple?
%\end{question}

\begin{remark}\label{rem:formula}
Theorems \ref{thm:PSP}, \ref{thm:1ch}, and \ref{thm:ch} can be directly adapted to outside of the group context, in which $G$ is replaced by an arbitrary first-order structure $M$, $\varphi(x\cdot y)$ is replaced by a formula $\varphi(x,y)$, and the \emph{productset property} is interpreted to mean $B\times C\seq\varphi(M)$ for some infinite $B,C\seq M$. Proposition \ref{prop:Ramsey} can also be generalized to this context to show that the productset property and $1$-sided productset property coincide for stable formulae.
\end{remark}

\subsection{Nonforking substantiality and generic formulae in simple theories}

A key part of the proof of Theorem \ref{thm:ch}$(b)$ is the symmetry of coheir independence in stable theories. In the more general context of simple theories, coheir independence can fail symmetry, but nonforking independence (which coincides with coheir independence in the stable case) remains symmetric. This motivates the following definition.

\begin{definition}
A formula $\varphi(x)$ is \emph{$1$-sided nonforking substantial} (resp. \emph{nonforking substantial}) if there are $b,c\in \G\backslash G$ such that $\varphi(b\cdot c)$ holds and $\tp(b/Gc)$ does (resp. $\tp(b/Gc)$ and $\tp(c/Gb)$ do) not fork over $G$.
\end{definition}

Recall that, for any theory $T$, coheir independence is stronger than nonforking independence, whence ($1$-sided) coheir substantial implies ($1$-sided) nonforking substantial. Applying Theorem \ref{thm:1ch}, we obtain the following conclusion.

\begin{corollary}\label{cor:1psp-fork}
If a formula $\varphi(x)$ has the $1$-sided productset property then $\varphi(x)$ is $1$-sided nonforking substantial.
\end{corollary}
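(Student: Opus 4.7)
The plan is to simply chain together Theorem \ref{thm:1ch} with the well-known general fact that finite satisfiability implies nonforking. Concretely, suppose $\varphi(x)$ has the $1$-sided productset property. By the implication $(i)\Rightarrow(ii)$ of Theorem \ref{thm:1ch}, $\varphi(x)$ is $1$-sided coheir substantial: there exist $b,c\in\G\setminus G$ with $\varphi(b\cdot c)$ holding and $\tp(b/Gc)$ finitely satisfiable in $G$.

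Next I would invoke the standard fact that, for any theory, a type over a set $B\supseteq G$ that is finitely satisfiable in $G$ does not fork over $G$. The quickest justification is to extend $\tp(b/Gc)$ to a global type $p\in S_1(\G)$ finitely satisfiable in $G$ (possible because finite satisfiability is preserved under extension by compactness). Such a $p$ is $G$-invariant, and $G$-invariant global types are nondividing, hence nonforking, over $G$; in particular their restrictions to any $B\supseteq G$ are nonforking over $G$. Applying this with $B=Gc$ shows $\tp(b/Gc)$ does not fork over $G$, so the pair $(b,c)$ witnesses that $\varphi(x)$ is $1$-sided nonforking substantial.

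There is really no obstacle here: the corollary is essentially a one-line deduction, whose content is entirely carried by Theorem \ref{thm:1ch} together with the inclusion (coheir independence) $\Rightarrow$ (nonforking independence). The only thing to be careful about is that Definition \ref{def:cs} phrases coheir substantiality in terms of the type over $Gc$, not over all of $\G$, so one needs the trivial remark that a type finitely satisfiable in $G$ (a fortiori over $G$) extends to a global coheir, which is then $G$-invariant and hence nonforking over $G$.
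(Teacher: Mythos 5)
Your proposal is correct and follows exactly the same route the paper takes: apply the implication $(i)\Rightarrow(ii)$ of Theorem~\ref{thm:1ch} to pass to $1$-sided coheir substantiality, then use the general fact that coheir independence (finite satisfiability over a model) implies nonforking independence. The paper dispatches the second step in one line (``coheir independence is stronger than nonforking independence''); you have merely unfolded the standard justification of that inclusion, which is harmless and accurate.
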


Combined with Fact \ref{fact:DGJLLM}, we have:

\begin{corollary}\label{cor:BDnf}
Suppose $G$ is a first-order expansion of an amenable group and $A\subseteq G$ is definable. If $\BD(A)>0$, then $A$ is $1$-sided nonforking substantial.
\end{corollary}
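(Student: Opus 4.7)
The plan is essentially to chain together the two preceding results in the obvious way; Corollary \ref{cor:BDnf} is really a two-line consequence rather than a theorem requiring a new idea. First I would invoke Fact \ref{fact:DGJLLM}: since $G$ is an expansion of an amenable group and $A \subseteq G$ is definable with $\BD(A) > 0$, the underlying set $A$ has the $1$-sided productset property in the sense of Definition \ref{def:1side}. Note that amenability is a property of the group structure, not of $G$ as a first-order structure, so passing from the ambient group to its expansion is free.

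Next I would apply Corollary \ref{cor:1psp-fork} to the definable set $A$, viewed via its defining formula $\varphi(x)$. That corollary says exactly that any formula with the $1$-sided productset property is $1$-sided nonforking substantial, so we conclude that $A$ is $1$-sided nonforking substantial, as required.

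There is no real obstacle here: all of the model-theoretic content has already been packaged into Corollary \ref{cor:1psp-fork} (which in turn rests on Theorem \ref{thm:1ch}, in particular the implication $(i)\Rightarrow(ii)$ that uses saturation of $\G$ to extract a global coheir from a witness sequence, together with the general fact that coheir independence implies nonforking independence). The only conceptual point worth flagging in the writeup is that the $1$-sided productset property is a property of the set $A$ inside the group $G$, while $1$-sided nonforking substantiality is a property of the defining formula $\varphi(x)$ relative to the monster $\G$; but since $A$ is assumed definable these two viewpoints are interchangeable via the terminology set up at the start of Section \ref{sec:PSP}. Thus the proof is simply: ``By Fact \ref{fact:DGJLLM}, $A$ has the $1$-sided productset property, and by Corollary \ref{cor:1psp-fork}, $A$ is $1$-sided nonforking substantial.''
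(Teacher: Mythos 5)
Your proposal is correct and matches the paper's intended argument exactly: the paper introduces Corollary \ref{cor:BDnf} with the phrase ``Combined with Fact \ref{fact:DGJLLM}, we have:'' immediately after Corollary \ref{cor:1psp-fork}, which is precisely the two-step chain you describe. Nothing further is needed.
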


The next corollary collects the stronger conclusions obtained in the case that $T$ is stable or simple. 

\begin{corollary}\label{cor:stable-simple}
Fix a formula $\varphi(x)$.
\begin{enumerate}[$(a)$]
\item Assume $T$ is stable. The following are equivalent.
\begin{enumerate}[$(i)$]
\item $\varphi(x)$ is ($1$-sided) nonforking substantial.
\item $\varphi(x)$ is ($1$-sided) coheir substantial.
\item $\varphi(x)$ has the ($1$-sided) productset property.
\end{enumerate}
\item Assume $T$ is simple. Then $\varphi(x)$ is nonforking substantial if and only if it is $1$-sided nonforking substantial.
\item Assume $T$ is simple. If $\varphi(x)$ has the productset property then it is nonforking substantial. 
\end{enumerate}
\end{corollary}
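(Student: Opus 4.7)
The plan is to treat (a), (b), (c) as formal consequences of Theorems \ref{thm:1ch} and \ref{thm:ch}, Proposition \ref{prop:Ramsey}, and Corollary \ref{cor:1psp-fork}, combined with two standard facts about independence: in stable theories, nonforking independence over a model coincides with coheir independence (finite satisfiability in the base), and in simple theories, nonforking independence over a model is symmetric. Crucially, $G$ is itself a model of $T$, so both facts apply with $G$ as the base.

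For part (a), I would invoke that in a stable theory every type over the model $G$ has a unique nonforking extension to any larger parameter set, which is automatically finitely satisfiable in $G$. Consequently, for $b,c \in \G \setminus G$, the conditions ``$\tp(b/Gc)$ does not fork over $G$'' and ``$\tp(b/Gc)$ is finitely satisfiable in $G$'' are equivalent, so the ($1$-sided) nonforking substantial property collapses onto the ($1$-sided) coheir substantial property. Theorem \ref{thm:1ch} then identifies $1$-sided coheir substantial with the $1$-sided productset property, Theorem \ref{thm:ch}(b) gives the unsided analogue, and Proposition \ref{prop:Ramsey} identifies the $1$-sided productset property with the productset property in the stable setting; so all six resulting conditions fall into a single equivalence class.

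For part (b), the engine is symmetry of nonforking over models in simple theories: for $b,c \in \G$, $\tp(b/Gc)$ does not fork over $G$ if and only if $\tp(c/Gb)$ does not fork over $G$. Applying this to any witness pair $(b,c)$ for the $1$-sided version automatically upgrades it to a witness for the $2$-sided version; the reverse implication is immediate from the definitions. Part (c) is then a one-line assembly: the productset property implies the $1$-sided productset property, which by Corollary \ref{cor:1psp-fork} gives $1$-sided nonforking substantiality, which by part (b) is the same as nonforking substantiality.

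I do not anticipate any real obstacle here; the statement is genuinely corollary-like. The only point requiring care is ensuring that the independence calculus is always applied with $G$ (viewed as an elementary substructure of the monster $\G$) playing the role of the base model, since it is precisely this hypothesis that makes both ``nonforking $=$ coheir'' (stable case) and symmetry of nonforking (simple case) available without qualification.
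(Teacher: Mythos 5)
Your proposal is correct and matches the paper's (implicit) proof: the paper presents this corollary without a written proof precisely because it is assembled from Theorems \ref{thm:1ch} and \ref{thm:ch}, Proposition \ref{prop:Ramsey}, and Corollary \ref{cor:1psp-fork}, together with the standard facts that over a model coheir and nonforking independence coincide in stable theories and that nonforking is symmetric over models in simple theories. Your identification of $G$ as the base model so that these facts apply is exactly the point that needs to be noted, and your chain of equivalences in part $(a)$ and the one-line assembly in part $(c)$ are the intended argument.
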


In the case that $T$ is stable, we have a single notion of ``substiantiality", which can be characterized in several interesting ways. Therefore, in the stable case,  we simply say that $\varphi(x)$ is \emph{substantial} if it satisfies one of the equivalent conditions in part $(b)$ of the previous corollary.

We again note that, since coheir independence implies nonforking independence in any theory, part $(c)$ of the previous corollary holds whenever $T$ satisfies the conclusion of Theorem \ref{thm:ch}$(b)$. 

\begin{question}
Does Corollary \ref{cor:stable-simple}$(c)$ hold without the assumption that $T$ is simple (or, under weaker assumptions such as NIP)?
\end{question}

\begin{question}
Is there a model theoretically ``tame" group $G$ (e.g. simple or $\NIP$) in which some formula $\varphi(x)$ is ($1$-sided) nonforking substantial but not ($1$-sided) coheir substantial?
\end{question}

 Our next goal is to connect the notion of substantial definable sets to the well-studied notion of model-theoretic genericity.

\begin{definition}\label{def:gen}$~$
\begin{enumerate}
\item A type $p\in S_1(G)$ is \emph{generic} if, for all $b,c\in \G$, if $b\models p$ and $\tp(b/Gc)$ does not fork over $G$, then $\tp(c\cdot b/Gc)$ does not fork over $G$.
\item A formula $\varphi(x)$ is \emph{generic} if it is contained in a generic type $p\in S_1(G)$.
\item A formula $\varphi(x)$ is \emph{syndetic} if $G$ is covered by finitely many left translates of $\varphi(G)$.
\end{enumerate}
\end{definition}

\begin{remark}
Recall that a set $A\subseteq\mathbb{N}$ of natural numbers is called \emph{syndetic} if finitely many translates of $A$ cover $\mathbb{N}$. In classical stable group theory, what we have called ``syndetic" is often used as the definition of ``generic". Indeed, for a stable group $G$, a formula $\varphi(x)$ is generic if and only if $G$ is covered by finitely many left translates of $\varphi(G)$ (see \cite[Lemma I.6.9]{PiGST}). However, the technology of generic types and sets can be employed outside of stable groups, most notably in expansions of groups with simple theories (where ``generic" and ``syndetic" need not coincide). 
\end{remark}

Another classical fact is that if $T$ is simple, then a formula $\varphi(x)$ is generic if and only if, for all $c\in \G$,  $\varphi(x\cdot c)$ does not fork over $G$ (see \cite[Proposition 3.10]{PiGDST}). The next result shows how ``$1$-sided nonforking substantial" is essentially obtained by replacing ``for all $c$" with ``there exists a $c$" (modulo certain caveats). Moreover, ``$1$-sided coheir substantial" is obtained from ``syndetic" in an analogous fashion.  In the following statement, we use $x\notin G$ as shorthand for the (partial) type $\{x\not=g : g\in G\}$.

\begin{proposition}\label{prop:generic}
Let $\varphi(x)$ be a formula.
\begin{enumerate}[$(a)$]
\item The following are equivalent.
\begin{enumerate}[$(i)$]
\item $\varphi(x)$ is $1$-sided coheir substantial.
\item There is $c\in \G\backslash G$ such that $\varphi(x\cdot c)$ has infinitely many solutions in $G$.
\end{enumerate}
\item The following are equivalent.
\begin{enumerate}[$(i)$]
\item $\varphi(x)$ is $1$-sided nonforking substantial.
\item There is $c\in \G\backslash G$ such that $\{\varphi(x\cdot c)\}\cup x\not\in G$ does not fork over $G$.
\end{enumerate}
\item The following are equivalent.
\begin{enumerate}[$(i)$]
\item $\varphi(x)$ is syndetic.
\item For all $c\in \G$, $\varphi(x\cdot c)$ has infinitely many solutions in $G$.
\item For all $c\in \G$, $\varphi(x\cdot c)$ has a solution in $G$.
\end{enumerate}
\item Assume $T$ is simple. The following are equivalent.
\begin{enumerate}[$(i)$]
\item $\varphi(x)$ is generic.
\item For all $c\in \G$, $\{\varphi(x\cdot c)\}\cup x\not\in G$ does not fork over $G$.
\item For all $c\in \G$, $\varphi(x\cdot c)$ does not fork over $G$.
\end{enumerate}
\end{enumerate}
\end{proposition}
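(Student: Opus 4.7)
The plan is to handle the four parts by repeatedly dualizing between ``existence of a witnessing element with an independence property'' and ``finite satisfiability or non-forking of an associated partial type,'' combined with elementarity, compactness, and the classical characterization of generic types in simple groups.

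Parts (a) and (b) should come out immediately by unpacking definitions. For (a), if $\varphi(x)$ is $1$-sided coheir substantial via $b,c\in\G\setminus G$, then $\tp(b/Gc)$ is finitely satisfiable in $G$ and contains both $\varphi(x\cdot c)$ and every formula $x\neq g$ with $g\in G$, forcing infinitely many solutions of $\varphi(x\cdot c)$ in $G$. Conversely, infinitely many such solutions make the partial type $\{\varphi(x\cdot c)\}\cup\{x\neq g:g\in G\}$ finitely satisfiable in $G$, and completing it to a type over $Gc$ finitely satisfiable in $G$ yields the desired $b$. Part (b) is the verbatim analogue with ``does not fork over $G$'' in place of ``finitely satisfiable in $G$.''

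For part (c), I first establish $(i)\Leftrightarrow(iii)$ and then derive $(ii)$. The implication $(i)\Rightarrow(iii)$ is elementarity: writing $G=\bigcup_{i=1}^n g_i\varphi(G)$ and transferring to $\G$, every $c\in\G$ lies in some $g_i\varphi(\G)$, so $x:=g_i^{-1}\in G$ realizes $\varphi(x\cdot c)$. The implication $(iii)\Rightarrow(i)$ is a compactness argument: $(iii)$ says the partial type $\{\neg\varphi(x\cdot y):x\in G\}$ in the variable $y$ has no realization in $\G$, so by saturation some finite subset is inconsistent, producing $x_1,\ldots,x_n\in G$ with $\G=\bigcup_i x_i^{-1}\varphi(\G)$, hence $G=\bigcup_i x_i^{-1}\varphi(G)$ by elementarity. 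Since $(ii)\Rightarrow(iii)$ is trivial, only $(iii)\Rightarrow(ii)$ remains: given $c\in\G$, for each $h\in G$ choose $i(h)\leq n$ with $\varphi(x_{i(h)}\cdot h\cdot c)$, so that $y_h:=x_{i(h)}\cdot h\in G$ satisfies $\varphi(y_h\cdot c)$; the map $h\mapsto y_h$ is at most $n$-to-$1$, because $y_h=y$ forces $h\in\{x_1^{-1}y,\ldots,x_n^{-1}y\}$, and hence its image is infinite.

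For part (d), I cite Pillay's classical characterization of genericity in simple groups for $(i)\Leftrightarrow(iii)$, and note that $(ii)\Rightarrow(iii)$ is immediate. The substantive direction is $(iii)\Rightarrow(ii)$, which I plan to prove by the following construction: fix $c\in\G$; take a nonalgebraic generic $p\in S_1(G)$ containing $\varphi(x)$ (provided by $(i)$); extend $p$ to a nonforking $p'\in S_1(Gc)$ via the extension property in simple theories; since $p'$ remains nonalgebraic while $Gc$ has size $|G|$ and $\G$ is $|G|^+$-saturated, there is a realization $b\models p'$ lying outside $Gc$. Setting $x:=b\cdot c^{-1}$ gives $\varphi(x\cdot c)=\varphi(b)$, $x\notin G$ (because $b\notin Gc$), and $\tp(x/Gc)$ nonforking over $G$ (because $x$ and $b$ are interdefinable over $Gc$); thus $\tp(x/Gc)$ is a nonforking completion of $\{\varphi(x\cdot c)\}\cup\{x\neq g:g\in G\}$.

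The main obstacle will be the $(iii)\Rightarrow(ii)$ direction of part (d), where one must uniformly produce a nonalgebraic nonforking realization of the shifted formula $\varphi(x\cdot c)$ for each $c\in\G$; this hinges on the fact that a nonforking extension of a nonalgebraic type in a simple theory is again nonalgebraic, together with the interdefinability trick above. The remaining implications reduce, for the most part, to adapting the finite-satisfiability style arguments from Theorem \ref{thm:1ch} to the settings of non-forking and syndeticity.
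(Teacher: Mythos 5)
Parts (a), (b), and (c) are fine. For (a) and (b) your definitional unpacking matches what the paper leaves as ``straightforward.'' For (c) you prove $(i)\Leftrightarrow(iii)$ by compactness/elementarity and then derive $(ii)$ from the finite cover $\G=\bigcup_i x_i^{-1}\varphi(\G)$ via the ``at most $n$-to-one'' counting argument; the paper instead proves $(i)\Rightarrow(ii)$ directly by a translation trick that avoids any fixed finite set $F$. Both work, and your route is a clean alternative.

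The problem is in part (d), specifically the justification for $\tp(x/Gc)$ not forking over $G$. You set $x:=b\cdot c^{-1}$ with $b\models p'$ a nonforking extension of the generic type $p$, and you claim $\tp(x/Gc)$ does not fork over $G$ ``because $x$ and $b$ are interdefinable over $Gc$.'' That inference is invalid. Interdefinability over the \emph{larger} base $Gc$ does not transport nonforking over the \emph{smaller} base $G$: the definable bijection $z\mapsto z\cdot c^{-1}$ is only $Gc$-definable, and nonforking over $G$ is not preserved under $Gc$-definable maps. Indeed, $b$ and $b\cdot c$ are also interdefinable over $Gc$, and if ``$b\ind_G c \Rightarrow b\cdot c \ind_G c$'' held automatically via interdefinability, the defining property of a generic type (Definition~\ref{def:gen}) would be vacuous. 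What actually makes the conclusion true here is the \emph{genericity} of $p$, together with the left/right symmetry of genericity in simple groups (which is needed because Definition~\ref{def:gen} speaks of $c\cdot b$, while you need $b\cdot c^{-1}$). The paper sidesteps this by never fixing $b$: it forms the partial type $q(x)=p(x\cdot c)$, observes that every formula $\psi(x\cdot c)$ with $\psi\in p$ does not fork over $G$ by the cited characterization applied to the generic formula $\psi$, deduces that $q$ does not fork (using closure of $p$ under conjunction), and only then finds a realization of $q$ outside $G$. Your fix would be to replace the interdefinability appeal by an explicit appeal to the cited equivalence $(i)\Leftrightarrow(iii)$ applied formula-by-formula to $p$, as the paper does, rather than to a single realization $b$.

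The observation you flag as the ``main obstacle'' (that nonforking extensions of nonalgebraic types are nonalgebraic in simple theories, giving $b\notin\operatorname{acl}(Gc)$ and hence $x\notin G$) is correct and matches the role played in the paper by the infiniteness of $q(\G)$.
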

\begin{proof}
Parts $(a)$ and $(b)$ are straightforward.

Part $(c)$. $(i)\Rightarrow (ii)$. Assume $\varphi(x)$ is syndetic. Fix $c\in \G$ and a finite subset $F\subseteq G$. We find $g\in G\backslash F$ such that $\varphi(g\cdot c)$ holds. By assumption, there are $a_1,\ldots,a_n\in G$ such that $\G=\bigcup_{i=1}^na_i\cdot\varphi(\G)$. Since $G$ is infinite, there is $b\in G$ such that $a^{\text{-}1}_i\cdot b^{\text{-}1}\not\in F$ for all $1\leq i\leq n$. Note that $\G=\bigcup_{i=1}^nb\cdot a_i\cdot\varphi(\G)$. So there is some $i\in\{1,\ldots,n\}$ such that $c\in b\cdot a_i\cdot\varphi(\G)$. Then $g:=a^{\text{-}1}_i\cdot b^{\text{-}1}$ is as desired.

$(ii)\Rightarrow (iii)$. Trivial.

$(iii)\Rightarrow (i)$. Suppose $\varphi(x)$ is not syndetic. Then the type $\{\neg\varphi(g\cdot y):g\in G\}$ is finitely satisfiable, and thus realized by  $c\in \G$. So $\varphi(x\cdot c)$ is not satisfied in $G$.

Part $(d)$. As noted above, the equivalence of $(i)$ and $(iii)$ is \cite[Proposition 3.10]{PiGDST}.

$(i)\Rightarrow(ii)$. Suppose $\varphi(x)$ is generic. Let $p\in S_1(G)$ be a generic type containing $\varphi(x)$. Fix $c\in \G$, and let $q(x)=p(x\cdot c)\in S_1(Gc)$. Since all formulae in $p$ are generic, it follows from $(i)\Rightarrow(iii)$ that $q$ does not fork over $G$. Let $X=p(\G)$, and note that $q(\G)=X\cdot c^{\text{-}1}$. Since $p$ is generic, it follows that $X$ is infinite. Therefore $q(\G)$ is infinite, and so there is a realization of $q$ in $\G\backslash G$. In particular, $\varphi(x\cdot c)\wedge x\not\in G$ is contained in $q$, and thus does not fork over $G$.

$(ii)\Rightarrow(iii)$. Trivial.
\end{proof}

\begin{remark}
It is worth emphasizing that, unlike the situation with generic formulae, in the characterization of $1$-sided nonforking substantial formulae, the type $\{\varphi(x\cdot c)\}\cup x\not\in G$ cannot be replaced by the formula $\varphi(x\cdot c)$. For example, let $G$ be the expansion of $(\Z,+,0)$ obtained by adding a predicate for $A=\{2^n:n\in\mathbb{N}\}$. Then $G$ is stable (see \cite{PaSk} or \cite{PoZ}). If $c\in A(\G)\backslash G$ is a nonstandard power of $2$, then the formula $A(x+c)$ is realized by $0\in G$ and thus does  not fork over $G$. But $A(x)$ is not nonforking substantial as the powers of $2$ do not have the sumset property.
\end{remark}

\begin{corollary}\label{cor:gensub}
Assume $T$ is simple. If a formula $\varphi(x)$ is generic then it is nonforking substantial.
\end{corollary}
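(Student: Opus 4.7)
The plan is to compose two facts already on the table: the simple-theory characterization of generic formulae given in Proposition \ref{prop:generic}$(d)$, and the symmetry upgrade from Corollary \ref{cor:stable-simple}$(b)$. Assume $\varphi(x)$ is generic. Applying the implication $(i)\Rightarrow(ii)$ of Proposition \ref{prop:generic}$(d)$, which needs simplicity of $T$, I conclude that the partial type $\{\varphi(x\cdot c)\}\cup x\not\in G$ does not fork over $G$ for \emph{every} $c\in \G$. Since $\G$ is sufficiently saturated and $G$ is infinite, there exists some $c\in \G\setminus G$; specializing the previous conclusion to such a $c$ yields condition $(ii)$ of Proposition \ref{prop:generic}$(b)$, so $\varphi(x)$ is $1$-sided nonforking substantial.

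The second step is simply to invoke Corollary \ref{cor:stable-simple}$(b)$, which asserts that under the standing assumption that $T$ is simple, being nonforking substantial is equivalent to being $1$-sided nonforking substantial. Combined with the first step, this gives that $\varphi(x)$ is nonforking substantial, completing the proof.

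I do not expect any genuine obstacle here. All the substantive model-theoretic work --- the characterization of generic formulae via nonforking in simple theories, and the symmetry of nonforking independence that lets the $1$-sided notion upgrade to the $2$-sided one --- has been carried out earlier in this section. The only thing to verify at the corollary level is the passage from the universal ``for all $c\in \G$'' quantifier to the existential ``there exists $c\in \G\setminus G$'' quantifier, which is immediate from the saturation of $\G$ together with the infinitude of $G$.
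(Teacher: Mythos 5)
Your proof is correct and matches the paper's (implicit) argument: the paper states this as an unproven corollary immediately following Proposition~\ref{prop:generic}, and the intended derivation is exactly what you write, namely apply $(i)\Rightarrow(ii)$ of part $(d)$, specialize the universally quantified $c$ to some $c\in\G\setminus G$ (available by saturation), read off $1$-sided nonforking substantiality from part $(b)$, and then upgrade to the $2$-sided notion via Corollary~\ref{cor:stable-simple}$(b)$.
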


In the case that $T$ is stable and $G$ is amenable, the implication given by the previous corollary can also be explained using Banach density and Corollary \ref{cor:BDnf}.

\begin{corollary}
Assume $T$ is stable and $G$ is amenable. Fix a definable set $A\subseteq G$.
\begin{enumerate}[$(a)$]
\item If $A$ is generic, then $\BD(A)>0$.
\item If $\BD(A)>0$, then $A$ is substantial.
\end{enumerate}
\end{corollary}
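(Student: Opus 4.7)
\medskip

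For part (a), the plan is to reduce genericity to syndeticity and then use amenability. Since $T$ is stable, the remark following Definition \ref{def:gen} (citing \cite[Lemma I.6.9]{PiGST}) tells us that the generic formula $\varphi(x)$ defining $A$ is syndetic, so there exist $a_1,\ldots,a_n\in G$ with $G=\bigcup_{i=1}^n a_i\cdot A$. Since $G$ is amenable, $\BD$ is left-invariant on subsets of $G$: for any F{\o}lner sequence $(F_k)_k$ witnessing $\BD(a_iA)$, the translated sequence $(a_i^{\text{-}1}F_k)_k$ is asymptotically F{\o}lner and satisfies $|F_k\cap a_i A|=|a_i^{\text{-}1}F_k\cap A|$, giving $\BD(a_iA)=\BD(A)$. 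Combined with finite subadditivity of $\BD$ (which follows by pigeonholing along a F{\o}lner sequence), we get $1=\BD(G)\leq\sum_{i=1}^n\BD(a_iA)=n\cdot\BD(A)$, so $\BD(A)\geq 1/n>0$.

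For part (b), the strategy is simply to chain together results already established in the excerpt. By Corollary \ref{cor:BDnf}, the hypothesis $\BD(A)>0$ implies that $A$ is $1$-sided nonforking substantial. Since $T$ is stable, Corollary \ref{cor:stable-simple}(a) gives the equivalence of the $1$-sided nonforking substantial, $1$-sided coheir substantial, (two-sided) coheir substantial, and productset properties, so $A$ is substantial in the sense defined just after that corollary.

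The only step requiring actual work is the subadditivity/left-invariance of $\BD$ used in part (a); everything else is a direct appeal to prior results. I do not expect any real obstacle, but the one thing to be slightly careful about is that left-invariance of $\BD$ in a general (not necessarily discrete) amenable group rests on the fact that the translate of a F{\o}lner sequence is still F{\o}lner in the limit, which is the standard definition one works with in \cite{DGJLLM}. Once that is in hand, both parts are short.
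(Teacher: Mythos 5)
Your proof is correct and takes essentially the same approach as the paper. The only cosmetic difference is in part (a): you work directly with upper Banach density and prove the subadditivity/translation-invariance facts inline, whereas the paper cites the standard equivalence between syndeticity and positive \emph{lower} Banach density (which then dominates upper Banach density); both are instances of the same F{\o}lner counting argument, and in both cases the model-theoretic input is the equivalence of generic and syndetic for definable sets in a stable group. Part (b) matches the paper verbatim: chain Corollary \ref{cor:BDnf} with Corollary \ref{cor:stable-simple}$(a)$.
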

\begin{proof}
Part $(a)$ follows from known facts. In particular, it is a standard exercise that, for subsets of amenable groups, syndeticity corresponds to positive \emph{lower} Banach density (which implies positive upper Banach density). Moreover, we have already recalled that generic and syndetic coincide for definable subsets of stable groups.

Part $(b)$ follows from Corollary \ref{cor:BDnf} and Corollary \ref{cor:stable-simple}$(a)$.
\end{proof}

\begin{question}
Is there an amenable group $G$ and a set $A\subseteq G$, definable in a stable first-order expansion of $G$, such that $A$ is substantial and $\BD(A)=0$?
\end{question}

\begin{remark}
Given an arbitrary ternary relation $\ind$ defined on small subsets of $\G$, one can define a formula $\varphi(x)$ to be:
\begin{enumerate}[$(i)$]
\item \emph{$\ind$-generic} if for all $c\in\G\backslash G$ there is $b\in \G\backslash G$ such that $\varphi(b\cdot c)$ holds and $b\ind_G c$, and
\item \emph{1-sided $\ind$-substantial} if there are $b,c\in\G\backslash G$ such that $\varphi(b\cdot c)$ holds and $b\ind_G c$.
\end{enumerate}
For example, define $b\ind^u_G c$ to mean that $\tp(b/Gc)$ is finitely satisfiable in $G$. Then Theorem \ref{thm:1ch} says that the 1-sided productset property is equivalent to 1-sided $\ind^u$-substantial; and Proposition \ref{prop:generic}$(c)$ says that syndetic is equivalent to $\ind^u$-generic. If $\ind^f$ denotes nonforking independence, then Corollary \ref{cor:1psp-fork} says that the $1$-sided productset property implies $1$-sided $\ind^f$-substantial. In fact, this holds when $\ind^f$ is replaced by any ternary relation $\ind$ weaker than $\ind^u$. Another notable example is \thorn-independence, denoted $\ind^{\text{\thorn}}$, in $T^{\text{eq}}$. For real elements, $\ind^u$ is the same when evaluated in $T$ or in $T^{\text{eq}}$ and, moreover, implies $\ind^{\text{\thorn}}$.  If we further assume $T$ is rosy, then $\ind^{\text{\thorn}}$-generic, as defined above, agrees with the notion defined in \cite[Section 1]{EKP} (by following the proof of Proposition \ref{prop:generic} with forking  replaced by \thorn-forking, and \cite[Proposition 3.10]{PiGDST} replaced by \cite[Proposition 1.17]{EKP}).
\end{remark}

\subsection{Generically stable types and the finitary productset property}

Continuing with the analogy between Proposition \ref{prop:MDN} and Theorem \ref{thm:PSP}, it is clear that types that commute with other types with respect to the operation $\otimes$ will play a role in the investigation of sets with the productset property. Such types have been identified (in the NIP context) as the so-called \emph{generically stable types} and thus one should be motivated to consider a suitable notion of substantial formulae associated to such types.  The catch here is that generically stable types are defined with respect to some small model $M\prec \mathbb{G}$ which need not be equal to the original small model $G$.  Thus, one is led to the investigation of formulae $\varphi$ for which $\varphi(M)$ has the productset property.  In terms of the original group $G$, we are forced to consider the following weaker notion:

\begin{definition}
A formula $\varphi(x)$ has the \emph{finitary productset property} if, for any $n\in \N$, there are $B,C\subseteq G$ with $|B|,|C|\geq n$ such that $B\cdot C\subseteq \varphi(G)$.
\end{definition}

\begin{remark}
$\varphi(x)$ has the finitary productset property if and only if there is an elementary extension $G\prec M$ such that $\varphi(M)$ has the productset property.  Also, if $G$ is $\omega$-saturated, then $\varphi(x)$ has the finitary productset property if and only if it has the productset property.
\end{remark}

It is possible to carry out the above discussion precisely and prove that in NIP theories, formulae which are substantial with respect to generically stable types (as defined below) satisfy the finitary productset property.  However, by relativizing coheir substantiality to an arbitrary model $M$, we can obtain this result without any assumptions on $T$. In particular, given a small model $M\prec\G$, we say that a formula $\varphi(x)$ is \emph{coheir substantial over $M$} if it satisfies the definition of coheir substantiality (Definition \ref{def:cs}), with $G$ replaced by the arbitrary model $M$. We can now define a notion of substantial formulae motived by generically stable types. 

\begin{definition}
$~$
\begin{enumerate}
\item (From \cite{PiTa})  Given a small model $M\prec\G$, a type $p\in S_1(\G)$ is \emph{generically stable over $M$} if it is $M$-invariant and, for any Morley sequence $(b_i)_{i<\omega}$ in $p$ over $M$ and any formula $\theta(x)$ (with parameters from $\G$), the set $\{i\in\omega:\G\models\theta(b_i)\}$ is finite or cofinite.
\item A formula $\varphi(x)$ is \emph{gs-substantial} if there is a small model $M\prec\G$ and $b,c\in\G\backslash M$ such that $\varphi(b\cdot c)$ holds and $\tp(b/Gc)$ extends to a nonalgebraic global type which is generically stable over $M$.
\end{enumerate}
\end{definition}

The following fact on generically stable types (in arbitrary theories) is necessary for our analysis.

\begin{fact}\label{fact:PT}\cite[Proposition 1]{PiTa}
Fix a small model $M\prec\G$. If a global type $p$ is generically stable over $M$, then $p$ is definable over $M$ and finitely satisfiable in $M$.
\end{fact}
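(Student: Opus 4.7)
The strategy is to use the Morley sequence $I = (b_i)_{i<\omega}$ of $p$ over $M$, together with the finite-or-cofinite hypothesis of generic stability, via the following pivotal observation: for every formula $\varphi(x,y)$ and every $c \in \G$,
\[
\varphi(x,c) \in p \iff \{i < \omega : \models \varphi(b_i, c)\} \text{ is cofinite}.
\]
For the forward direction, extend $I$ to $I' = (b_i)_{i \leq \omega}$ by appending $b_\omega \models p|_{Mb_{<\omega}c}$, so that $\models\varphi(b_\omega,c)$; generic stability applied to the Morley sequence $I'$ gives the finite-or-cofinite dichotomy for $\{i : \models\varphi(b_i,c)\}$, and the presence of $b_\omega$ in the set forces it to be cofinite. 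The reverse direction applies the same argument to $\neg\varphi$. An auxiliary consequence of generic stability, needed in the second half, is that $I$ is \emph{totally indiscernible} over $M$: any formula witnessing an asymmetry on $I$ would specialize to a trace that is neither finite nor cofinite.

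For \textbf{definability over $M$}, I would combine the pivotal observation with a compactness argument to obtain a uniform bound $N = N(\varphi) < \omega$ such that $\varphi(x,c) \in p \iff \models\varphi(b_N, c)$: were no such $N$ available, compactness would produce some $c^* \in \G$ with $\{i : \models\varphi(b_i,c^*)\}$ neither finite nor cofinite. This yields a defining scheme with parameters $b_N \in \G$. To transfer it to $M$, use that $\{c : \varphi(x,c) \in p\}$ is $M$-invariant (by $M$-invariance of $p$) and that $I$ is $M$-indiscernible; a further compactness argument then produces an equivalent $L(M)$-formula $d_p\varphi(y)$.

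For \textbf{finite satisfiability in $M$}, total indiscernibility of $I$ upgrades to the symmetry $p(x)\otimes q(y) = q(y)\otimes p(x)$ for every $M$-invariant $q \in S_1(\G)$. Given $\varphi(x,c) \in p$, I would extend $\tp(c/M)$ to a global $q \in S_1(\G)$ finitely satisfiable in $M$. By $M$-invariance of $p$, one gets $\varphi(x,y) \in p(x) \otimes q(y)$; by the symmetry, $\varphi(x,y) \in q(y) \otimes p(x)$, so $\varphi(b, y) \in q$ for any $b \models p|_M$. Finite satisfiability of $q$ in $M$ then produces some $m \in M$ with $\models\varphi(b, m)$, and a careful use of $M$-invariance (together with the defining scheme from the previous step) transfers this to the desired $\models\varphi(m', c)$ for some $m' \in M$.

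The main technical obstacle is this final transfer step for finite satisfiability, where the symmetry of generically stable types and the $M$-invariance of both $p$ and $q$ must be leveraged simultaneously to deliver the $M$-witness in the correct argument position — it is precisely this transfer that requires the full strength of generic stability rather than mere $M$-invariance plus definability. The complete argument, valid in an arbitrary theory $T$, is carried out in \cite[Proposition 1]{PiTa}.
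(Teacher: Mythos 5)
The paper does not prove this statement; it is imported from Pillay and Tanovi\'c, \cite[Proposition 1]{PiTa}, so there is no internal argument to compare against. Your ``pivotal observation'' that $p$ equals the average type of any Morley sequence $I=(b_i)_{i<\omega}$ over $M$ is indeed the engine of the cited proof, and your route to definability through a compactness-supplied uniform bound is essentially right — except that the bound cannot give a single representative $b_N$ with $\varphi(x,c)\in p\iff\models\varphi(b_N,c)$. The $<N$ exceptional indices where $\varphi(b_i,c)$ disagrees with $p$ vary with $c$ and may well include $N$. What compactness actually yields is a ``majority-vote'' formula $\psi(\bar x,y)$, say ``at least $N$ of $\varphi(x_0,y),\dots,\varphi(x_{2N-1},y)$ hold,'' with $\varphi(x,c)\in p\iff\models\psi(\bar b,c)$ for the fixed tuple $\bar b=(b_0,\dots,b_{2N-1})$. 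Your subsequent passage to an $L(M)$-formula via $M$-invariance and $M$-indiscernibility of $I$ then goes through.

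The genuine gap is in finite satisfiability. Commutativity $p\otimes q=q\otimes p$ (which does hold for generically stable $p$ and $M$-invariant $q$ in an arbitrary theory) together with finite satisfiability of $q$ in $M$ produces $\models\varphi(b,m)$ with $b\models p|_M$ and $m\in M$ — the witness from $M$ lands in the \emph{second} coordinate, but we need one in the \emph{first}. Since $c\in\G\setminus M$ while $m\in M$, we have $c\not\equiv_M m$, so no amount of $M$-invariance or appeal to the defining scheme swaps the roles; the ``transfer step'' you defer is not a technicality but the whole difficulty, and the route through $\otimes$-commutativity does not seem to close it. The argument that does work goes back through the Morley sequence rather than through $q$: having fixed the majority formula $\psi(\bar x,y)$ and the $L(M)$-definition $d_p\varphi(y)$, note that $\forall y\,(d_p\varphi(y)\rightarrow\psi(\bar x,y))$ is an $L(M)$-formula in $\bar x$ lying in $\tp(\bar b/M)$; since this is a type over the model $M$, it is automatically finitely satisfiable in $M$, so there is $\bar m\in M^{2N}$ with $\models\forall y\,(d_p\varphi(y)\rightarrow\psi(\bar m,y))$. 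Given $\varphi(x,c)\in p$ we then get $\models\psi(\bar m,c)$, hence $\models\varphi(m_i,c)$ for some coordinate $m_i\in M$. It is this interplay of the average-type characterization with definability — not symmetry of $\otimes$ — that delivers finite satisfiability.
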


\begin{remark}
If $T$ is NIP, then the converse of Fact \ref{fact:PT} holds and provides one of the many equivalent ways to formulate the notion of generically stable global types (see, for example, \cite[Theorem 2.29]{simon}).  Moreover, \cite[Proposition 2.33]{simon} states that, in NIP theories, generically stable types commute (with respect to $\otimes$) with all invariant types. 
\end{remark}

\begin{theorem}\label{thm:gss}
If a formula $\varphi(x)$ is gs-substantial, then it has the finitary productset property. 
\end{theorem}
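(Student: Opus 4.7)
The plan is to pass to the monster model $\G$, construct infinite sets $B, C \seq \G$ with $B \cdot C \seq \varphi(\G)$, and then transfer finite sub-witnesses back to $G$ by elementarity. For each $n$, the statement ``there exist distinct $b_1, \ldots, b_n, c_1, \ldots, c_n$ with $\varphi(b_i \cdot c_j)$ for all $i, j$'' is first-order over the parameters of $\varphi$ (which lie in $G$), so its truth in $\G$ immediately yields the finitary productset property in $G$.

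Unpacking gs-substantiality, we obtain $M \prec \G$ small, elements $b, c \in \G \setminus M$ with $\varphi(b \cdot c)$, and a nonalgebraic global type $p$ extending $\tp(b/Gc)$ which is generically stable over $M$; in particular $\varphi(x \cdot c) \in p$. I would first enlarge $M$ to absorb the parameters of $\varphi$ (effectively arranging $G \seq M$), and then replace $c$ and $b$ so that both remain outside the enlarged $M$: this is possible because $p$ is nonalgebraic and, by Fact~\ref{fact:PT}, definable over $M$, so the $p$-definition $\psi(y) := d_p x\, \varphi(x \cdot y)$ is an $M$-formula with $\psi(c)$, and the type $\{\psi(y)\} \cup \{y \neq m : m \in M\}$, being finitely satisfiable in $M$, is realized in $\G$; then take any $b\models p\mid Mc$.

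Now $\psi$ is an $M$-formula with $\psi(c)$ and $c \notin M$. Were $\psi(\G)$ finite of size $k$, the statement ``exactly $k$ elements satisfy $\psi$'' would hold in $M$ by $M \prec \G$, forcing $\psi(\G) = \psi(M) \seq M$ and contradicting $c \notin M$; thus $\psi(M)$ is infinite. Pick distinct $(c_i)_{i<\omega}$ in $\psi(M)$; each satisfies $\varphi(x \cdot c_i) \in p$. By Fact~\ref{fact:PT} again, $p$ is finitely satisfiable in $M$, so the partial type $\pi(x) := \{\varphi(x \cdot c_i) : i<\omega\} \cup \{x \neq m : m \in M\}$ is finitely satisfiable in $M$ (using nonalgebraicity of $p$ for the clauses $x \neq m$). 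Saturation of $\G$ then lets us inductively construct pairwise distinct $(b_i)_{i<\omega}$ in $\G \setminus M$, each realizing $\pi$, so $\varphi(b_i \cdot c_j)$ holds for all $i, j < \omega$, completing the construction.

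The crux of the argument is Fact~\ref{fact:PT}: definability of $p$ over $M$ supplies the infinite witness $C$ inside $M$ through $\psi$, while finite satisfiability of $p$ in $M$ together with saturation of $\G$ supplies the infinite witness $B$ inside $\G$. The main subtlety is the bookkeeping to arrange that $\psi$ is defined over $M$ and that a suitable non-$M$ representative $c$ survives the reduction; beyond that, the argument is closely parallel to the proofs of Theorems~\ref{thm:1ch} and~\ref{thm:PSP}, with the generically stable $p$ playing the role that a type finitely satisfiable in $G$ played there, and no further strength of generic stability beyond Fact~\ref{fact:PT} is actually invoked.
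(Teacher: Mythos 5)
Your proof is correct in its core mechanism and it runs on the same engine as the paper's argument: Fact~\ref{fact:PT} gives both halves (definability of $p$ over $M$ supplies the $c_i$'s; finite satisfiability of $p$ in $M$ supplies the $b_i$'s), and the infinitude of $\psi(M)$ via $M\prec\G$ and $c\notin M$ is exactly the elementarity trick the paper uses (inside the relativized Theorem~\ref{thm:1ch}$[(iii)\Rightarrow(i)]$). The paper packages this as ``$\varphi$ is coheir substantial over $M$'' and then cites the relativized Theorem~\ref{thm:ch}$(a)$, using the duality that $M$-definability of $\tp(b'/Mc)$ forces $\tp(c/Mb')$ to be finitely satisfiable in $M$; you unwind that into an explicit construction with the $p$-definition $\psi(y)=d_p x\,\varphi(x\cdot y)$. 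Both routes are legitimate and essentially the same idea.

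There is, however, one genuine circularity in your preliminary reduction. You enlarge $M$ to an $M'\supseteq M\cup G$ and then assert that $\{\psi(y)\}\cup\{y\neq m:m\in M'\}$ is finitely satisfiable in $M'$, in order to \emph{produce} a new $c\notin M'$. But your justification for finite satisfiability is the very claim that $\psi(M')$ is infinite, and your argument for that (next paragraph) assumes you already have $\psi(c)$ with $c\notin M'$ --- the thing being constructed. Indeed, if the original $c$ were algebraic over $M\cup G$, it would lie in every model $M'\supseteq M\cup G$, and nothing you've said rules out $\psi(\G)$ being a finite subset of $\text{acl}(MG)$; the duality $\tp(c/M'b')$ finitely satisfiable in $M'$ also fails to help, as that type is algebraic once $c\in M'$. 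The fix is simply not to enlarge $M$: the intended reading of gs-substantiality (and the one the paper's own proof silently relies on, e.g.\ in invoking ``$\varphi(M)$ has the productset property'' and performing elementarity over $M$ with $\varphi$'s parameters) is that $G\prec M$ already, so $\psi$ is an $M$-formula and the given $c\notin M$ does the job. If you remove the enlargement step and simply take $b'\models p|_{Mc}$ as the paper does, your $\psi$-argument goes through cleanly; also be slightly more explicit that $\pi(x)\subseteq p|_{M\cup\{c_i\}}$ and that the latter is nonalgebraic, which is what lets you pick pairwise distinct $b_i$'s.
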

\begin{proof}
Assume that $\varphi(x)$ is gs-substantial as witnessed by $M\prec \G$ and $b,c\in \G\backslash M$.  By relativizing Theorem \ref{thm:ch}$(a)$, we see that if $\varphi(x)$ is coheir substantial over $M$, then $\varphi(M)$ has the productset property, whence $\varphi(x)$ has the finitary productset property.  Thus, it suffices to show that $\varphi(x)$ is coheir substantial over $M$. 

Let $p(x)$ be a nonalgebraic global extension of $\tp(b/Gc)$ which is generically stable over $M$. By Fact \ref{fact:PT}, $p$ is definable over $M$ and finitely satisfiable in $M$. Let $b'\models p|_{MGc}$. Then $b'\equiv_{Gc} b$ and so $b'\cdot c\in\varphi(\G)$. Since $p$ is nonalgebraic, we have $b'\not\in M$. Since $\tp(b'/Mc)$ is contained in $p(x)$, we have that $\tp(b'/Mc)$ is finitely satisfiable in $M$. We also have that $\tp(b'/Mc)$ is $M$-definable, and thus $\tp(c/Mb')$ is finitely satisfiable in $M$. Altogether $b',c\in \G\backslash M$ witness that $\varphi(x)$ is coheir substantial over $M$.
\end{proof}

From the proof of Theorem \ref{thm:gss} it is clear that if $\varphi(x)$ is gs-substantial as witnessed by $M=G$, then $\varphi(x)$ is coheir substantial and thus has the productset property. This result can, once again, be adapted to work outside of the setting of groups (see Remark \ref{rem:formula}).

\section{Erd\H{o}s's conjecture in groups with distal theories}\label{sec:distal}

Given a structure $M$, a \emph{Keisler measure on $M$} is a finitely additive probability measure on the definable subsets of $M$. Such a measure is \emph{generically stable} if it has a (unique) extension to a global $M$-invariant measure which is definable and finitely satisfiable in $M$ (see \cite[Section 7.5]{simon}). In the NIP context, the following equivalent definition can be used, which avoids reference to global extensions (see \cite[Theorem 7.29]{simon}). 

\begin{definition}\label{def:gsmeasure}
Let $M$ be a structure whose theory is NIP. A Keisler measure $\mu$ on $M$ is \emph{generically stable} if, for any formula $\varphi(x,\bar{z})$ and $\epsilon>0$, there are $a_1,\ldots,a_n\in M$ such that, for any $\bar{c}\in M^{|\bar{z}|}$, 
\[
\left|\mu(\varphi(M,\bar{c}))-\textstyle\frac{1}{n}|\{i:M\models\varphi(a_i,\bar{c})\}|\right|<\epsilon.
\]
\end{definition}

We also recall that if $I$ is an index set, $(M_i)_{i\in I}$ is a sequence of $\mathcal{L}$-structures, $\mu_i$ is Keisler measure on $M_i$, $\mathcal{U}$ is an ultrafilter on $I$, and $N:=\prod_{\mathcal{U}}M_i$, then one can define the ultralimit measure $\mu:=\prod_{\mathcal{U}}\mu_i$ on $N$ such that, for definable $D_i\seq M_i$, $\mu(\prod_{\mathcal{U}}D_i):=\lim_{\mathcal{U}}\mu_i(D_i)$. In the NIP setting, if each $\mu_i$ is generically stable on $M_i$, then $\mu$ is generically stable on $N$ (see \cite[Corollary 1.3]{Simon-note}). 

We now focus on \emph{distal structures}, which were first defined by P. Simon to capture the class of ``purely unstable NIP theories". (See \cite[Chapter 9]{simon} for a precise definition.)  Examples of distal structures include o-minimal structures, $(\Z,+,<,0)$, and, more generally, any ordered dp-minimal structure. In this section, we analyze productsets in distal expansions of groups. In light of Remark \ref{rem:formula}, the productset phenomenon is closely related to the recent study of regularity in distal theories. In particular, we cite the following fact.

\begin{fact}\textnormal{\cite[Corollary 4.6]{ChSt}}\label{fact:CS}
Let $M$ be a distal $\mathcal{L}$-structure and fix a formula $\varphi(x,y,\bar{z})\in\mathcal{L}$. Then there are $\epsilon>0$ and formulae $\theta_1(x,\bar{w}),\theta_2(y,\bar{w})\in\mathcal{L}$ such that, for any generically stable Keisler measures $\mu,\nu$ on $M$ and any $\bar{a}\in M^{|\bar{z}|}$, there are $\bar{c}_1,\bar{c}_2\in M^{|\bar{w}|}$ such that $\mu(\theta_1(M,\bar{c}_1)),\nu(\theta_2(M,\bar{c}_2))\geq\epsilon$ and either $\theta_1(M,\bar{c}_1)\times\theta_2(M,\bar{c}_2)\seq \varphi(M^2,\bar{a})$ or $\theta_1(M,\bar{c}_1)\times\theta_2(M,\bar{c}_2)\seq \neg\varphi(M^2,\bar{a})$.
\end{fact}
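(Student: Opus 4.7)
The plan is to combine distal cell decomposition (a fundamental property of distal structures) with the finite-approximation characterization of generic stability in Definition \ref{def:gsmeasure}, reducing the measure version of the statement to a combinatorial ``distal rectangular Ramsey'' statement for finite sets. The formulas $\theta_1(x,\bar{w})$ and $\theta_2(y,\bar{w})$ should arise from iterating the strong honest definitions of Chernikov--Simon applied to $\varphi(x,y,\bar{z})$, with $\bar{z}$ kept free so that the choice is uniform in $\bar{a}$.

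The combinatorial core to establish is the following discrete analogue: there is $\delta>0$ depending only on $\varphi$ such that, for every finite $A,B\seq M$ and every $\bar{a}\in M^{|\bar{z}|}$, there exist parameters $\bar{c}_1,\bar{c}_2$ with
\[
|\theta_1(M,\bar{c}_1)\cap A|\geq\delta|A|,\quad|\theta_2(M,\bar{c}_2)\cap B|\geq\delta|B|,
\]
and the rectangle $(\theta_1(M,\bar{c}_1)\cap A)\times(\theta_2(M,\bar{c}_2)\cap B)$ contained in $\varphi(M^2,\bar{a})$ or in its complement. To prove this, I would apply the strong honest definition on the $x$-side to produce a covering of $A$ by $\theta_1$-cells which are simultaneously $\varphi(\cdot,b,\bar{a})$-homogeneous for all $b\in B$, with bounded-complexity parameter space (this is what distality contributes beyond $\NIP$). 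A pigeonhole then extracts a cell of relative size at least some $\delta_1$, and repeating the honest-definition argument on the $y$-side inside that cell produces the monochromatic rectangle with $\delta:=\delta_1\cdot\delta_2$.

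With the discrete statement in hand, the passage to measures is straightforward. Fix $\eta>0$ and apply Definition \ref{def:gsmeasure} to the formula $\theta_1(x,\bar{w})$ and the measure $\mu$: this yields a finite set $A\seq M$ such that $|\mu(\theta_1(M,\bar{c}))-\tfrac{1}{|A|}|\theta_1(M,\bar{c})\cap A||<\eta$ uniformly in $\bar{c}$; analogously obtain $B$ for $\nu$ and $\theta_2$. Apply the discrete lemma to $A$, $B$, $\bar{a}$ to obtain $\bar{c}_1,\bar{c}_2$. The approximation bound then gives $\mu(\theta_1(M,\bar{c}_1))\geq\delta-\eta$ and $\nu(\theta_2(M,\bar{c}_2))\geq\delta-\eta$, so choosing $\eta<\delta/2$ yields the conclusion with $\epsilon=\delta/2$.

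The main obstacle is establishing the discrete rectangular Ramsey statement with $\delta$ \emph{uniform} in $|A|$, $|B|$, and $\bar{a}$. In a merely $\NIP$ theory, VC-theoretic arguments yield monochromatic rectangles only with $\delta\to 0$ as $|A|,|B|\to\infty$; distality is precisely what enables a uniform lower bound, via the bounded-complexity covering produced by the strong honest definition. A secondary subtlety is the uniform handling of $\bar{a}$: the honest definition must be taken relative to $\varphi(x,y,\bar{z})$ treating $\bar{z}$ as an additional free variable, so that $\theta_1,\theta_2$ do not depend on $\bar{a}$, and the finite sets produced by Definition \ref{def:gsmeasure} must approximate $\mu$ and $\nu$ uniformly in the ``to be chosen'' parameter $\bar{w}$ -- which is precisely what that definition guarantees.
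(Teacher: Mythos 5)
The paper does not prove this statement; it is imported verbatim as Corollary 4.6 of Chernikov--Starchenko \cite{ChSt}, so there is no in-paper argument to compare against. That said, your overall framework (distal cell decomposition / strong honest definitions to get a finite ``strong Erd\H{o}s--Hajnal'' lemma, then passage to generically stable measures via the finite-average approximation of Definition \ref{def:gsmeasure}) is indeed the right shape of the ChSt argument, and your observations about keeping $\bar z$ free and about the uniformity built into Definition \ref{def:gsmeasure} are both correct. But there are two substantive gaps.

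First, the pigeonhole in your discrete lemma does not yield a $\delta$ that is uniform in $|A|$ and $|B|$. A distal cell decomposition of $A$ into $\theta_1$-cells that are $\varphi(\cdot,b,\bar a)$-homogeneous simultaneously for \emph{all} $b\in B$ is uniformly definable, but the number of cells grows polynomially in $|B|$ (for $\varphi(x,y)=(x<y)$ in a dense linear order one already needs $|B|+1$ cells). So pigeonhole only produces a cell meeting $A$ in $\gtrsim |A|/|B|^{d}$ points, which goes to $0$ as $|B|\to\infty$. The missing ingredient is a cutting/random-sampling step: decompose with respect to a random subsample $B_0\subseteq B$ of bounded size $s$, so that the number of cells is $O(s^{d})$, and use an $\varepsilon$-net/averaging argument to show that some cell that is large in $A$ is crossed by at most a small fraction of the remaining $b\in B$. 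It is precisely this step, not the bare existence of the honest definition, that converts ``polynomially many cells'' into a uniform lower bound $\delta_1$.

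Second, your discrete lemma concludes homogeneity only of the finite rectangle $\bigl(\theta_1(M,\bar c_1)\cap A\bigr)\times\bigl(\theta_2(M,\bar c_2)\cap B\bigr)$, whereas Fact \ref{fact:CS} asserts homogeneity of the full definable rectangle $\theta_1(M,\bar c_1)\times\theta_2(M,\bar c_2)$. The approximation step from Definition \ref{def:gsmeasure} only controls $\mu(\theta_1(M,\bar c_1))$ and $\nu(\theta_2(M,\bar c_2))$; it says nothing about the behaviour of $\varphi$ on the part of the definable rectangle lying outside $A\times B$, so the measure statement does not follow from the lemma as you have stated it. The distal argument does in fact give homogeneity of the entire definable rectangle (the cells themselves are $\varphi$-homogeneous definable sets, and the $\theta_2$-side can be taken as a sub-level set of $\forall x\,(\theta_1(x,\bar c_1)\to\varphi(x,y,\bar z))$), so the fix is to prove and use the stronger form of the discrete lemma; merely intersecting with $A$ and $B$ throws away exactly the information the measure step needs.
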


Although we have stated the previous result using singleton variables $x$ and $y$ (which will be sufficient for our purposes), it is worth noting that the results of \cite{ChSt} apply to formulae in any number of partitioned tuples of variables. In \cite[Theorem 5.11]{CGS}, Chernikov, Galvin, and Starchenko use Fact \ref{fact:CS} to prove essentially what we have stated below as Corollary \ref{cor:distalE}. Using similar methods, we prove the following refinement of \cite[Theorem 5.11]{CGS} and, for the sake of completeness, derive Corollary \ref{cor:distalE} directly from it. Once again, we have stated our results for formulae in two singleton variables $x$ and $y$, and the generalization to formulae in any number of partitioned tuples of variables is evident.

\begin{proposition}\label{prop:distal}
Let $M$ be a distal $\mathcal{L}$-structure and fix a formula $\varphi(x,y,\bar{z})\in\mathcal{L}$. Then there are formulae $\theta_1(x,\bar{w}),\theta_2(x,\bar{w})\in\mathcal{L}$ such that, for any $\bar{a}\in M^{|\bar{z}|}$, if for all $n>0$ there are $B,C\seq M$ such that $|B|,|C|\geq n$ and $B\times C\seq\varphi(M^2,\bar{a})$, then for all $n>0$ there are $\bar{c}_1,\bar{c}_2\in M^{|\bar{w}|}$ such that $|\theta_1(M,\bar{c}_1)|,|\theta_2(M,\bar{c}_2)|\geq n$ and $\theta_1(M,\bar{c}_1)\times\theta_2(M,\bar{c}_2)\seq\varphi(M^2,\bar{a})$.
\end{proposition}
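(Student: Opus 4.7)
The plan is to reduce the desired conclusion to an application of Fact~\ref{fact:CS} after encoding the finite sets $B_n, C_n$ as generically stable Keisler measures in an ultrapower. First, apply Fact~\ref{fact:CS} to $M$ and the formula $\varphi(x,y,\bar z)$ to obtain the $\epsilon > 0$ and formulae $\theta_1(x,\bar w), \theta_2(y,\bar w)$ appearing there; these are the formulae we use in the conclusion of the proposition.

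Now fix $\bar a$ and assume the hypothesis: for each $n$ there are $B_n, C_n \seq M$ with $|B_n|, |C_n| \geq n$ and $B_n \times C_n \seq \varphi(M^2, \bar a)$. Let $\mu_n$ and $\nu_n$ be the normalized counting measures on $M$ supported on $B_n$ and $C_n$ respectively. Since each is a convex combination of finitely many Dirac measures, Definition~\ref{def:gsmeasure} is verified trivially (enumerate $B_n$, respectively $C_n$, to serve as the $a_1,\ldots,a_n$), so $\mu_n, \nu_n$ are generically stable. Fix a nonprincipal ultrafilter $\mathcal{U}$ on $\N$ and set $N := \prod_\mathcal{U} M$, $\mu := \prod_\mathcal{U} \mu_n$, $\nu := \prod_\mathcal{U} \nu_n$. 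Because distal theories are NIP, the ultralimit result cited from \cite{Simon-note} implies $\mu, \nu$ are generically stable on $N$. Letting $\bar a^*$ denote the diagonal image of $\bar a$ in $N$, apply Fact~\ref{fact:CS} to $N, \mu, \nu, \bar a^*$ to obtain $\bar c_1^*, \bar c_2^* \in N$ with $\mu(\theta_1(N,\bar c_1^*)), \nu(\theta_2(N,\bar c_2^*)) \geq \epsilon$ and either $\theta_1(N,\bar c_1^*) \times \theta_2(N,\bar c_2^*) \seq \varphi(N^2,\bar a^*)$ or the analogous inclusion into $\neg\varphi(N^2,\bar a^*)$.

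The key step is to eliminate the ``negation'' alternative. Pick representatives $\bar c_i^* = [(\bar c_i^n)_n]$ for $i=1,2$. The measure estimate $\mu(\theta_1(N,\bar c_1^*)) \geq \epsilon$ unpacks (via the definition of the ultralimit measure on the definable set $\theta_1(N,\bar c_1^*)$) to $\lim_\mathcal{U} |B_n \cap \theta_1(M,\bar c_1^n)|/|B_n| \geq \epsilon$, so on a $\mathcal{U}$-large set of $n$'s the intersection $B_n \cap \theta_1(M,\bar c_1^n)$ is nonempty and in fact of size at least $(\epsilon/2)|B_n|$; similarly for $C_n \cap \theta_2(M,\bar c_2^n)$. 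If the negation alternative held, Łoś would yield, on a $\mathcal{U}$-large set of $n$, that $\theta_1(M,\bar c_1^n) \times \theta_2(M,\bar c_2^n) \seq \neg\varphi(M^2,\bar a)$; picking $b$ in the first intersection and $c$ in the second gives $(b,c) \in B_n \times C_n \seq \varphi(M^2,\bar a)$ and simultaneously $(b,c) \in \neg\varphi(M^2,\bar a)$, a contradiction. Hence the positive alternative holds in $N$, and pulling it back by Łoś shows that for a $\mathcal{U}$-large set of $n$'s, $\theta_1(M,\bar c_1^n) \times \theta_2(M,\bar c_2^n) \seq \varphi(M^2,\bar a)$ and both sides have cardinality at least $(\epsilon/2) n$. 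Given any $k$, take $n$ in this set large enough that $(\epsilon/2) n \geq k$ to finish.

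The main obstacle I expect is the bookkeeping around the transfer through the ultrapower: verifying that the ``counting'' measures are legitimately generically stable and survive as such under the ultralimit, and (more substantively) extracting from the lower-bound on $\mu$ and $\nu$ the concrete incidence between $\theta_i(M, \bar c_i^n)$ and $B_n, C_n$ needed to forbid the negation branch of Fact~\ref{fact:CS}. Once this incidence is in hand, the rest is just a Łoś-style descent from $N$ back to $M$.
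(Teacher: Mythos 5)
Your proof is correct and follows essentially the same path as the paper's: encode the witnessing finite sets as generically stable counting measures, pass to an ultrapower $N=M^{\mathcal{U}}$, apply the distal regularity lemma (Fact~\ref{fact:CS}) there, rule out the negative alternative using the measure lower bounds, and descend back to $M$ via \L o\'s (where the paper instead invokes elementarity of $M\prec N$, a cosmetic difference). One precision point: you should apply Fact~\ref{fact:CS} directly to $N$ to obtain $\epsilon$ and $\theta_1,\theta_2$ (as the paper does), rather than applying it to $M$ and then reusing the same $\epsilon,\theta_i$ in $N$, since as stated Fact~\ref{fact:CS} is model-specific; this is harmless here because the resulting $\theta_i$ are still $\mathcal{L}$-formulae and $\bar{a}\in M$, so they witness the proposition's conclusion about $M$.
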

\begin{proof}
Fix $\varphi(x,y,\bar{z})$. Fix also a nonprincipal ultrafilter $\mathcal{U}$ on $\N$, and set $N:=M^{\mathcal{U}}$. Let $\epsilon>0$ and $\theta_1(x,\bar{w}),\theta_2(y,\bar{w})$ be as in Fact \ref{fact:CS} with respect to the distal structure $N$. Fix $\bar{a}\in M^{|\bar{z}|}$ and assume that, for all $n\in\N$, there are $B_n,C_n\seq M$ such that $|B_n|=|C_n|=n$ and $B_n\times C_n\seq \varphi(M^2,\bar{a})$. Define, for $n\in\N$, the Keisler measures $\mu_n$ and $\nu_n$ on $M$, such that, given a definable set $D\seq M$, we have
\[
\mu_n(D)=\textstyle \frac{1}{n}|D\cap B_n|\makebox[.5in]{and}\nu_n(D)=\frac{1}{n}|D\cap C_n|.
\]
By Definition \ref{def:gsmeasure}, $\mu_n$ and $\nu_n$ are generically stable measures on $M$. Let $\mu=\prod_{\mathcal{U}}\mu_n$ and $\nu=\prod_{\mathcal{U}}\nu_n$. Then $\mu$ and $\nu$ are generically stable Keisler measures on $N$.  By Fact \ref{fact:CS}, there are $\bar{c}_1,\bar{c}_2\in N^{|\bar{w}|}$ such that, setting $B:=\theta_1(N,\bar{c}_1)$ and $C:=\theta_2(N,\bar{c}_2)$, we have $\mu(B),\nu(C)\geq\epsilon$ and either $B\times C\seq\varphi(N^2,\bar{a})$ or $B\times C\seq\neg\varphi(N^2,\bar{a})$. Since $N$ is an elementary extension of $M$, in order to finish the proof, it suffices to show that $B,C$ are infinite and $(B\times C)\cap \varphi(N^2,\bar{a})\neq\emptyset$. 

For $i\in\{1,2\}$ and $n\in\N$, let $\bar{c}_i^n\in M^{|\bar{w}|}$ be such that $\bar{c}_i=(\bar{c}_i^n)_{n\in\N}/\mathcal{U}$. Set $B^*_n:=\theta_1(M,\bar{c}^n_1)$ and $C^*_n:=\theta_2(M,\bar{c}^n_2)$. Then $B=\prod_{\mathcal{U}}B^*_n$ and $C=\prod_{\mathcal{U}}C^*_n$. Since $\mu(B)\geq\epsilon$, it follows that
\[
X:=\left\{n\in\N:|B^*_n\cap B_n|\geq\textstyle\frac{\epsilon}{2}|B_n|\right\}=\left\{n\in\N:\mu_n(B^*_n)\geq\textstyle\frac{\epsilon}{2}\right\}\in\mathcal{U}.
\]
Given $k\in\N$, there is a cofinite set $Y_k\seq\N$ such that $\frac{\epsilon}{2}|B_n|\geq k$ for all $n\in Y_k$. Thus, for any $k\in\N$, $X\cap Y_k\seq\{n\in N:|B^*_n|\geq k\}\in\mathcal{U}$, which implies $|B|\geq k$. Thus $B$ is infinite and, by a similar argument, $C$ is infinite. Also, if 
\[
Y:=\{n\in\N:(B^*_n\times C^*_n)\cap(B_n\times C_n)\neq\emptyset\},
\]
then $Y\in\mathcal{U}$ and $Y\seq\{n\in\N:(B^*_n\times C^*_n)\cap\varphi(M^2,\bar{a})\neq\emptyset\}$. Since $B\times C=\prod_{\mathcal{U}}(B^*_n\times C^*_n)$, it follows that $(B\times C)\cap\varphi(N^2,\bar{a})\neq\emptyset$.
\end{proof}

Recall that a structure $M$ \emph{eliminates $\exists^\infty$} if for any formula $\theta(x,\bar{w})$, there is some $n\in\N$ such that, for any $\bar{c}\in M^{|\bar{w}|}$, if $|\theta(M,\bar{c})|\geq n$, then $\theta(M,\bar{c})$ is infinite.

\begin{corollary}[\textnormal{Chernikov, Galvin, Starchenko \cite{CGS}}]\label{cor:distalE}
Let $M$ be a distal $\mathcal{L}$-structure with elimination of $\exists^\infty$.  Then for any formula $\varphi(x,y,\bar{z})\in\mathcal{L}$, there is some $n\in\N$ such that, for any $\bar{a}\in M^{|\bar{z}|}$, if there are $B',C'\seq M$ such that $|B'|,|C'|\geq n$ and $B'\times C'\seq\varphi(M^2,\bar{a})$, then there are infinite definable $B,C\seq M$ such that $B\times C\seq\varphi(M^2,\bar{a})$.
\end{corollary}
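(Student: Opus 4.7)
The plan is to combine Proposition \ref{prop:distal} with elimination of $\exists^\infty$, packaging the desired conclusion as a single first-order formula $\chi(\bar{z})$, and then extracting a uniform threshold $n$ via an ultraproduct argument.

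Fix a nonprincipal ultrafilter $\mathcal{U}$ on $\N$ and let $N := M^{\mathcal{U}}$; since $N \equiv M$, $N$ is itself a distal $\mathcal{L}$-structure that eliminates $\exists^\infty$. Apply Proposition \ref{prop:distal} to $N$ and $\varphi(x,y,\bar{z})$ to obtain formulas $\theta_1(x,\bar{w}), \theta_2(y,\bar{w}) \in \mathcal{L}$, and let $N_0 \in \N$ be a common threshold witnessing elimination of $\exists^\infty$ for $\theta_1$ and $\theta_2$ (valid in both $M$ and $N$ by elementary equivalence). Let $\chi(\bar{z})$ be the first-order $\mathcal{L}$-formula asserting the existence of tuples $\bar{c}_1, \bar{c}_2$ such that $\theta_1(x,\bar{c}_1)$ and $\theta_2(y,\bar{c}_2)$ each have at least $N_0$ solutions and $\theta_1(x,\bar{c}_1) \wedge \theta_2(y,\bar{c}_2) \to \varphi(x,y,\bar{z})$ for all $x,y$. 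Whenever $M \models \chi(\bar{a})$, the witnessing sets are infinite and definable, delivering the conclusion of the corollary.

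It therefore suffices to produce $n \in \N$ such that, for every $\bar{a} \in M^{|\bar{z}|}$, the existence of $B', C' \seq M$ with $|B'|, |C'| \geq n$ and $B' \times C' \seq \varphi(M^2,\bar{a})$ forces $M \models \chi(\bar{a})$. Suppose, toward contradiction, that no such $n$ exists, and for each $k \in \N$ choose $\bar{a}_k \in M^{|\bar{z}|}$ and $B'_k, C'_k \seq M$ with $|B'_k|, |C'_k| \geq k$, $B'_k \times C'_k \seq \varphi(M^2,\bar{a}_k)$, and $M \models \neg\chi(\bar{a}_k)$. Set $\bar{a}^* := (\bar{a}_k)_{k \in \N}/\mathcal{U} \in N^{|\bar{z}|}$. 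For each fixed $m \in \N$, the first-order sentence ``there exist distinct $x_1, \ldots, x_m$ and distinct $y_1, \ldots, y_m$ with $\varphi(x_i, y_j, \bar{a}_k)$ for all $i,j \leq m$'' holds in $M$ for every $k \geq m$, so by {\L}o\'{s}'s theorem it holds in $N$ at $\bar{a}^*$. Hence for every $m>0$ there are $B,C \seq N$ with $|B|,|C|\geq m$ and $B \times C \seq \varphi(N^2,\bar{a}^*)$. Invoking the conclusion of Proposition \ref{prop:distal} as already applied to $N$, at the parameter $\bar{a}^*$ and with internal threshold $N_0$, yields $\bar{c}_1, \bar{c}_2 \in N^{|\bar{w}|}$ with $|\theta_i(N,\bar{c}_i)| \geq N_0$ and $\theta_1(N,\bar{c}_1) \times \theta_2(N,\bar{c}_2) \seq \varphi(N^2,\bar{a}^*)$, i.e.\ $N \models \chi(\bar{a}^*)$. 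By {\L}o\'{s}'s theorem once more, $M \models \chi(\bar{a}_k)$ for $\mathcal{U}$-many $k$, contradicting the choice of the $\bar{a}_k$.

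The main obstacle I anticipate is precisely the jump from the per-parameter statement of Proposition \ref{prop:distal} to the \emph{uniform} threshold $n$ demanded by the corollary: Proposition \ref{prop:distal} supplies no a priori bound relating the size of the hypothesised rectangle to that of the refined $\theta$-rectangle. The ultraproduct maneuver sidesteps this by pooling all potential failures of uniformity into the single nonstandard parameter $\bar{a}^*$, where Proposition \ref{prop:distal} applies in the intended fashion; the first-order character of $\chi$, secured by elimination of $\exists^\infty$, then allows {\L}o\'{s}'s theorem to transport the conclusion back to $M$.
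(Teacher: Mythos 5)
Your proof is correct and follows essentially the same route as the paper's: pass to the ultrapower $N=M^{\mathcal{U}}$, pull the formulas $\theta_1,\theta_2$ from Proposition~\ref{prop:distal}, use elimination of $\exists^\infty$ to fix a finite threshold, package the good outcome as a first-order formula ($\chi$ for you, $\psi$ in the paper), apply Proposition~\ref{prop:distal} at the nonstandard parameter $\bar{a}^*$, and transfer back to $M$ via {\L}o\'{s}. The only stylistic difference is that you phrase the reduction ``it suffices to find $n$ forcing $M\models\chi(\bar{a})$'' up front, while the paper runs the whole thing as a single proof by contradiction; the mathematical content is identical.
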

\begin{proof}
Suppose the corollary is false as witnessed by $\varphi(x,y,\bar{z})$. Then for any $n\in\N$, there is $\bar{a}_n\in M^{|\bar{z}|}$ such that $\varphi(M^2,\bar{a}_n)$ does not contain $B\times C$ for any infinite definable $B,C\seq M$, but there are $B_n,C_n\seq M$ such that $|B_n|,|C_n|\geq n$ and $B_n\times C_n\seq\varphi(M^2,\bar{a}_n)$. Let $\mathcal{U}$ be a nonprincipal ultrafilter on $\N$ and set $N:=M^{\mathcal{U}}$. Let $\theta_1(x,\bar{w})$ and $\theta_2(y,\bar{w})$ be given by Proposition \ref{prop:distal} (with respect to $\varphi(x,y,\bar{z})$ and the model $N$). Since $M$ eliminates $\exists^\infty$, there is some $m\in\N$ such that, for $i\in\{1,2\}$ and $\bar{c}\in M^{|\bar{w}|}$, if $|\theta_i(M,\bar{c})|\geq m$, then $\theta_i(M,\bar{c})$ is infinite. Let $\bar{a}^*=(\bar{a}_n)_{n\in\N}/\mathcal{U}\in N$. By assumption and \L{}o\'{s}'s Theorem, $\varphi(x,y,\bar{a})$ satisfies the conditions of Proposition \ref{prop:distal} (with respect to $N$). By choice of $\theta_1$ and $\theta_2$, it follows that $\bar{c}^*$ satisfies the formula $\psi(z)$ expressing ``there are $\bar{w}_1,\bar{w}_2$ such that $\theta_1(x,\bar{w}_1)$ and $\theta_2(y,\bar{w}_2)$ each have at least $m$ solutions and $\theta_1(x,\bar{w}_2)\times\theta_2(y,\bar{w}_2)$ is contained in $\varphi(x,y,\bar{z})$." Therefore there is some $n\in\N$ such that $M\models\psi(\bar{a}_n)$. By choice of $m$, this contradicts the assumption on $\varphi(x,y,\bar{a}_n)$.
\end{proof}

We now return to the setting of a fixed first-order expansion of a group $G$ (with the same notation and conventions described after Remark \ref{rem:MDN}).  

\begin{definition}$~$
\begin{enumerate}
\item A formula $\varphi(x)$ has the \emph{definable productset property} if there are infinite definable $B,C\seq G$ such that $B\cdot C\seq\varphi(G)$.
\item A formula $\varphi(x)$ has the \emph{definable finitary productset property} if there are formulae $\theta_1(x,\bar{w}),\theta_2(x,\bar{w})$ such that, for all $n\in\N$ there are $\bar{c}_1,\bar{c}_2\in G^{|\bar{w}|}$ such that $|\theta_1(G,\bar{c}_1)|,|\theta_2(G,\bar{c}_2|\geq n$ and $\theta_1(G,\bar{c}_2)\cdot\theta_2(G,\bar{c}_2)\seq\varphi(G)$.
\end{enumerate}
\end{definition}

\begin{theorem}\label{thm:distal}
Assume $G$ is distal, and fix a formula $\varphi(x)$.
\begin{enumerate}[$(a)$]
\item If $\varphi(x)$ has the finitary productset property, then it has the definable finitary productset property.
\item Suppose $G$ eliminates $\exists^\infty$. Then there is some $n\in\N$ such that, if there are $B,C\seq G$ such that $|B|,|C|\geq n$ and $B\cdot C\seq \varphi(G)$, then $\varphi(x)$ has the definable productset property.
\end{enumerate}
\end{theorem}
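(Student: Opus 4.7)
The plan is to reduce both parts to their already-established formula-level counterparts, Proposition~\ref{prop:distal} and Corollary~\ref{cor:distalE}, by applying them to the composed formula obtained from $\varphi$ and the group operation. Specifically, I would write $\varphi(x) = \psi(x, \bar{a})$ for some parameter-free $\psi(x, \bar{z}) \in \mathcal{L}$ and $\bar{a} \in G^{|\bar{z}|}$, and set $\chi(x, y, \bar{z}) := \psi(x \cdot y, \bar{z})$, which is again an $\mathcal{L}$-formula since the group operation is in $\mathcal{L}$. The key translation is that, for any $B, C \seq G$,
\[
B \cdot C \seq \varphi(G) \text{ if and only if } B \times C \seq \chi(G^2, \bar{a}).
\]
Thus the (finitary) productset property for $\varphi$ matches exactly the hypothesis of Proposition~\ref{prop:distal} (resp. Corollary~\ref{cor:distalE}) applied to $\chi$ with the partition $(x,y)$ versus $\bar{z}$.

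For part $(a)$, I would apply Proposition~\ref{prop:distal} to the formula $\chi$ in the distal structure $G$, obtaining formulae $\theta_1(x, \bar{w})$ and $\theta_2(y, \bar{w})$ as in its conclusion. The finitary productset hypothesis on $\varphi$ supplies, for each $n \in \N$, witnessing sets $B, C \seq G$ of size at least $n$ with $B \times C \seq \chi(G^2, \bar{a})$, so the proposition provides $\bar{c}_1, \bar{c}_2 \in G^{|\bar{w}|}$ with $|\theta_1(G, \bar{c}_1)|, |\theta_2(G, \bar{c}_2)| \geq n$ and $\theta_1(G, \bar{c}_1) \times \theta_2(G, \bar{c}_2) \seq \chi(G^2, \bar{a})$. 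Via the translation above, this reads $\theta_1(G, \bar{c}_1) \cdot \theta_2(G, \bar{c}_2) \seq \varphi(G)$, so $\theta_1$ and $\theta_2$ witness the definable finitary productset property. For part $(b)$, I would instead apply Corollary~\ref{cor:distalE} to $\chi$, now invoking the hypothesis that $G$ eliminates $\exists^\infty$; the resulting $n \in \N$ is precisely the integer demanded by the theorem, and the corollary's conclusion of infinite definable $B, C \seq G$ with $B \times C \seq \chi(G^2, \bar{a})$ rewrites as $B \cdot C \seq \varphi(G)$, i.e.\ the definable productset property for $\varphi$.

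Since the substantive technical work, including the use of ultraproducts of generically stable measures and the Chernikov--Starchenko distal regularity lemma, has already been carried out in Proposition~\ref{prop:distal} and Corollary~\ref{cor:distalE}, there is no genuine obstacle here. The only point requiring any care is the syntactic bookkeeping: ensuring that the parameters of $\varphi$ are properly absorbed into the $\bar{z}$-slot of $\chi(x,y,\bar{z})$, and that $(x,y)$ is treated as the partitioned productset pair, so that the auxiliary formulae $\theta_1, \theta_2$ produced by the cited results depend only on $\psi$ (and hence can be quoted uniformly in $\bar{a}$) rather than on the specific parameters of $\varphi$.
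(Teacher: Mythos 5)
Your proof is correct and takes essentially the same route as the paper, which simply applies Proposition~\ref{prop:distal} and Corollary~\ref{cor:distalE} to the formula $\varphi(x\cdot y)$; your write-up just makes the parameter bookkeeping explicit.
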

\begin{proof}
For part $(a)$, apply Proposition \ref{prop:distal} to the formula $\varphi(x\cdot y)$. For part $(b)$, apply Corollary \ref{cor:distalE} to the formula $\varphi(x\cdot y)$. 
\end{proof}

\begin{remark}
Although it will not be significant for the subsequent results, it is worth noting that in Theorem \ref{thm:distal}$(a)$, the formulae $\theta_1(x,\bar{w})$ and $\theta_2(x,\bar{w})$ which witness the definable finitary productset property do not depend on the parameters appearing $\varphi(x)$. Similarly, the integer $n$ in part $(b)$ does not depend on parameters.
\end{remark}

Theorem \ref{thm:distal} has the following consequences for expansions of amenable groups.

%Using Theorem \ref{thm:distal}, we can deduce the definable finitary productset property for sets of positive Banach density definable in distal expansions of amenable groups (e.g. any dp-minimal expansion of an ordered group). If we further assume elimination of $\exists^\infty$ (e.g. any o-minimal expansion of an ordered group), then sets of positive Banach density have the definable productset property.

\begin{corollary}
Assume $G$ is a distal expansion of an amenable group.
\begin{enumerate}[$(a)$]
\item If $A\seq G$ is definable with $\BD(A)>0$, then $A$ has the definable finitary productset property.
\item Suppose $G$ eliminates $\exists^\infty$. If $A\seq G$ is definable with $\BD(A)>0$, then $A$ has the definable productset property.
\end{enumerate}
\end{corollary}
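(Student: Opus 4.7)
The plan is to bridge positive Banach density with the finitary productset property of Section 3.3, and then feed the result into Theorem \ref{thm:distal}. The hypothesis that $G$ expands an amenable group enters only via Fact \ref{fact:DGJLLM}, which guarantees, from $\BD(A)>0$, sequences $(b_i)_{i<\omega}$ and $(c_i)_{i<\omega}$ in $G$ with $b_i\cdot c_j\in A$ for all $i\leq j<\omega$.

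The one new observation I would record is that the $1$-sided productset property always implies the finitary productset property. Indeed, given the witnessing sequences above and any $n\in\N$, set
\[
B_n=\{b_0,b_1,\ldots,b_{n-1}\}\qquad\text{and}\qquad C_n=\{c_{n-1},c_n,\ldots,c_{2n-2}\}.
\]
For every $(b_i,c_j)\in B_n\times C_n$ one has $i\leq n-1\leq j$, so $b_i\cdot c_j\in A$. Thus $|B_n|,|C_n|\geq n$ and $B_n\cdot C_n\subseteq A$, witnessing the finitary productset property for (the formula defining) $A$.

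Part $(a)$ now follows immediately: apply Theorem \ref{thm:distal}$(a)$, which promotes the finitary productset property to the definable finitary productset property in any distal $G$. For part $(b)$, the elimination of $\exists^\infty$ allows us to apply Theorem \ref{thm:distal}$(b)$: that theorem furnishes a threshold $n_0\in\N$, depending only on the formula defining $A$, such that the existence of any $B,C\subseteq G$ with $|B|,|C|\geq n_0$ and $B\cdot C\subseteq A$ already forces the definable productset property for $A$. The construction in the previous paragraph produces exactly such a pair $(B_{n_0},C_{n_0})$, so we are done.

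There is no real obstacle here beyond assembling the pieces correctly. All three ingredients (Fact \ref{fact:DGJLLM}, the trivial truncation argument, and Theorem \ref{thm:distal}) are black-boxed, and the only mild care needed is to invoke Theorem \ref{thm:distal} with the formula $\varphi(x)$ defining $A$ (with no auxiliary parameter sort $\bar{z}$), so that the conclusion is genuinely about $A$ itself rather than about a uniform family.
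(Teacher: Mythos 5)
Your proof is correct and matches the paper's argument essentially verbatim: both use Fact \ref{fact:DGJLLM} to get the one-sided witnesses, pass to the finitary productset property via the same truncation $B_n=\{b_0,\ldots,b_{n-1}\}$, $C_n=\{c_{n-1},\ldots,c_{2n-2}\}$, and then invoke Theorem \ref{thm:distal}.
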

\begin{proof}
By Theorem \ref{thm:distal}, it suffices to show that sets of positive Banach density have the finitary productset property. To see this, we use Fact \ref{fact:DGJLLM} and the observation that the $1$-sided productset property implies the finitary productset property. Indeed, fix $A\seq G$ and suppose $(b_i)_{i<\omega},(c_i)_{i<\omega}$ are sequences of pairwise distinct elements such that $b_i\cdot c_j\in A$ for all $i\leq j$. For any $n\in\N$, if $B=\{b_0,\ldots,b_{n-1}\}$ and $C=\{c_{n-1},\ldots,c_{2n-2}\}$, then $B\cdot C\seq A$.
\end{proof}

The class of distal expansions of amenable groups includes any o-minimal expansion of an ordered group and, more generally, any dp-minimal expansion of an ordered group (in fact, such groups are always abelian \cite{SiDP}). In the o-minimal case, one also has elimination of $\exists^\infty$. It is also worth mentioning the fact that any $\aleph_0$-categorial dp-minimal group is nilpotent-by-finite \cite{LKS}, and thus amenable. Of course, in order to use the previous work to conclude the productset property for sets of positive Banach density, one would need the further assumption of stability or distality.

\section*{Acknowledgements}

The authors thank Renling Jin for suggesting the proof of Proposition \ref{positiveBD} and Mauro DiNasso for allowing us to use Proposition \ref{prop:MDN}. We also thank Sergei Starchenko for helpful conversations, and Pierre Simon for comments on an earlier draft, which improved some results in Section \ref{sec:ch} (especially Theorem \ref{thm:PSP}).

\bibliographystyle{amsplain}
\end{document}